\newcommand{\R}{\mathbb{R}}
\newcommand{\T}{\Theta}
\newcommand{\one}{\mathbf{1}}
\newcommand{\Bu}{\mathbf{u}}
\newcommand{\Bv}{\mathbf{v}}
\newcommand{\<}{\langle}
\renewcommand{\>}{\rangle}
\newcommand{\jump}[1]{{[{#1}]}}
\newcommand{\rref}[1]{{\rm\ref{#1}}}
\newcommand{\G}{\Gamma}
\newcommand{\CT}{{\cal T}}
\newcommand{\CV}{{\cal V}}
\newcommand{\CW}{{\cal W}}
\newcommand{\be}{\begin{equation}}
\newcommand{\ee}{\end{equation}}
\newtheorem{theorem}{Theorem}[section]
\newtheorem{remark}[theorem]{Remark}
\newtheorem{lemma}[theorem]{Lemma}
\newtheorem{corollary}[theorem]{Corollary}
\title{
Ultra-weak formulation of a hypersingular integral equation on polygons
and DPG method with optimal test functions
\thanks{Supported by FONDECYT project 1110324 and
        CONICYT project Anillo ACT1118 (ANANUM).}
}
\author{
Norbert Heuer
\thanks{
Facultad de Matem\'aticas, Pontificia Universidad Cat\'olica de Chile,
Avenida Vicu\~na Mackenna 4860, Macul, Santiago, Chile,
email: {\tt nheuer@mat.puc.cl}}
\and
Felipe Pinochet
\thanks{
Facultad de Ingenier\'\i a, Pontificia Universidad Cat\'olica de Chile,
Avenida Vicu\~na Mackenna 4860, Macul, Santiago, Chile,
email: {\tt fipinoch@gmail.com}}
}
\begin{document}
\date{}
\maketitle

\bigskip
\begin{abstract}
We present an ultra-weak formulation of a hypersingular integral equation
on closed polygons and prove its well-posedness and equivalence with the
standard variational formulation. Based on this ultra-weak formulation we present
a discontinuous Petrov-Galerkin method with optimal test functions and
prove its quasi-optimal convergence in $L^2$. Theoretical results are confirmed
by numerical experiments on an open curve with uniform and adaptively refined meshes.

\bigskip
\noindent
{\em Key words}: Discontinuous Petrov-Galerkin method with optimal test functions,
                 boundary element method,
                 hypersingular operators

\noindent
{\em AMS Subject Classification}:
65N38,          
65N30,          
65N12.          
\end{abstract}

\section{Introduction}

The design and analysis of numerical methods for the solution of hypersingular
integral equations is inherently difficult due to the nature of their underlying
solution spaces. In case of the operator related to the Laplacian, this space
is the trace of $H^1$ functions onto the boundary of the problem. It is the
Sobolev space $H^{1/2}$ of order $1/2$ whose norm is non-local by nature.
In this paper we present an ultra-weak formulation that allows to avoid using
fractional-order Sobolev spaces for its setting and for that of subsequent
Petrov-Galerkin approximations. While this change of Sobolev spaces is
mathematically useful there are also several practical implications as is known
from corresponding setups for partial differential equations.

To our knowledge, the use of ultra-weak formulations in the numerical approximation
of partial differential equations has started with the works by Despr\'es
and Cessenat \cite{Despres_94_SUF,CessenatD_98_AUW}. Recently, Demkowicz and
Gopalakrishnan have proposed a discontinuous Petrov-Galerkin (DPG)
method based on ultra-weak formulations where norms and test functions are tailored
towards stability \cite{DemkowiczG_11_CDP,DemkowiczG_11_ADM}. This is particularly
useful for singularly perturbed problems like convection-dominated diffusion
and wave problems \cite{ZitelliMDGPC_11_CDP,DemkowiczH_RDM}.

In this paper, we consider the model problem of the hypersingular integral equation
on polygons which appears when dealing with the Laplacian on a polygonal domain
and Neumann boundary condition. This integral equation gives rise to a well-posed
variational formulation in the trace space of $H^1$ functions which can be solved
by boundary elements, cf.~\cite{NedelecP_73_MVE,HsiaoW_77_FEM} and see
\cite{Stephan_86_BIE} for open surfaces. In recent years there has been some progress
in the use of discontinuous approximations for hypersingular operators,
in particular Crouzeix-Raviart elements \cite{HeuerS_09_CRB},
mortar and Nitsche domain decompositions \cite{HealeyH_10_MBE,ChoulyH_12_NDD},
and discontinuous $hp$ approximation \cite{HeuerM_DGB}.
These discontinuous approximations constitute a variational crime since an analysis
of their jumps can not be based on (well-defined) trace operators in the Sobolev space
of order $1/2$. As a consequence some of the results of these papers suffer from
logarithmic perturbations which lead to sub-optimal error estimates, and others are
based on mesh and penalty parameter restrictions.

The method we propose in this paper does not suffer from such variational crimes.
Instead of analyzing a weak form of the hypersingular operator in fractional-order
Sobolev spaces (based on $H^{1/2}$) we propose an ultra-weak formulation which
shifts trial and test functions away from $H^{1/2}$ to $L^2$ and piecewise $H^1$
spaces. We then study the DPG method with optimal test functions based on the
ultra-weak formulation and corresponding trial and test spaces with appropriate
norms. The presentation follows \cite{DemkowiczH_RDM} and puts special emphasis
on the analysis of norms in the test space. In particular, practicality of the
method requires that test norms are localizable. Having well-posedness of the
ultra-weak formulation and appropriate norm equivalences in the test space,
the (quasi-) optimality of the DPG approximation with optimal test functions
follows by standard arguments. We do not repeat these ideas and arguments here
but refer to \cite{DemkowiczG_11_CDP,ZitelliMDGPC_11_CDP} for details.
The setting of the DPG method with optimal test functions will be given in
Section~\ref{sec_DPG} below.

Whereas our ultra-weak formulation avoids the setting of fractional-order Sobolev
spaces it obviously involves a boundary integral operator. Therefore, norms
are localizable but the problem under consideration is still global. This
global effect enters through the calculation of optimal test functions. In contrast
to DPG methods with optimal test functions for partial differential equations,
we cannot calculate test functions on the fly. Nevertheless,
\begin{itemize}
\item the linear systems of the DPG method, for approximating optimal test functions
and for error calculation have sparse matrices.
\end{itemize}
This is due to the use of localizable norms in the test space.
In the case of our model problem, some parts of the test functions can be given
analytically and one component has to be approximated
(using a weakly singular rather than hypersingular operator).
Other standard advantages of the method are maintained:
\begin{itemize}
\item Error control is inherent since errors in the energy norm (which is now
      $L^2$ rather than $H^{1/2}$) can be calculated through the implementation
      of the trial-to-test operator ($\Theta$ in \eqref{Theta}).
\item Since norms are localizable the energy norm of the error gives local
      information which can be used to steer adaptive refinements.
\item Error estimates and stability hold for any combination of meshes and
      polynomial degrees so that $hp$ methods do not require a new analysis.
\item Since approximation spaces are composed of $L^2$ parts and
      trial functions can be discontinuous, one has full flexibility
      for $h$ and $p$ adaptivity.
\end{itemize}

Our analysis makes use of the $H^1$-regularity of the solution
to the model problem. On open curves, however, the solution is not in $H^1$ but
rather an element of Sobolev spaces of any order smaller than one. We suppose that
our method and techniques can be extended to this case by switching back
from $L^2$ bilinear forms to dualities between fractional-order Sobolev spaces
(though of orders close to zero).
This is left to future research. Nevertheless, numerical experiments are
performed for this limit case (on an interval as curve in $\R^2$)
and produce convincing results.

Of course, the advantages of the DPG method with optimal test functions
are more relevant for problems in three space dimensions.
Our setup of an ultra-weak formulation and analytical techniques should
in principle extend to higher dimensions, which is also left to future
research.

The remainder of the paper is as follows. In the next section we define the model
problem and derive an ultra-weak formulation. Theorem~\ref{thm_ultra} proves
its well-posedness and equivalence with the standard variational form.
The proof of stability makes use of the equivalence of different norms in the
test space (Theorems~\ref{thm_equiv1},~\ref{thm_equiv2}).
These equivalences are shown in Section~\ref{sec_norms}, and are
based on a stability analysis in Section~\ref{sec_tech} of the adjoint problem
with respect to the bilinear form of the ultra-weak formulation.
As consequence of equivalences of different test norms we also have equivalences
of corresponding norms in the trial space (Corollary~\ref{cor_equiv}).
This is essential to prove quasi-optimal convergence of the DPG method with optimal
test functions. The method and this result (Theorem~\ref{thm_DPG}) is presented
in Section~\ref{sec_DPG}. The calculation of test functions for our model problem
is done in Section~\ref{sec_test}. In Section~\ref{sec_int} we elaborate on
the differences in the formulation for an open curve, and optimal test functions
are discussed as well. No analysis is given for this case. Numerical results
on an interval are reported in Section~\ref{sec_num}. We end with some conclusions.

Throughout the paper, $a\lesssim b$ means that $a\le cb$ with a generic constant
$c>0$ that is independent of involved parameters like $h$ or $p$.
Similarly, the notation $a\gtrsim b$ and $a\simeq b$ is used.

\section{Model problem and ultra-weak formulation} \label{sec_model}
\setcounter{equation}{0} \setcounter{figure}{0} \setcounter{table}{0}

We present our model problem, an ultra-weak formulation thereof and prove its
well-posedness.

The model problem is as follows.
We consider a polygonal domain (simply connected, Lipschitz) $\Omega\subset\R^2$
with boundary curve $\G$ and assume that $\G$ has logarithmic capacity
cap$(\G)<1$ (this will be needed for coercivity of the weakly singular operator
introduced below, cf.~\cite[page 264]{McLean_00_SES}).
Then, for given $f\in L^2_0(\G)$, we look for $\phi\in H^{1/2}(\G)/\R$ such that
\be \label{model}
   \CW\phi=f\qquad \text{on}\quad \G.
\ee
Here, $\CW$ is the hypersingular operator defined by
\[
   \CW v =
   \frac{1}{2\pi} \frac{\partial}{\partial n}\int_\G v(y) \frac{\partial}{\partial n_y}
                                       \log|\cdot-y|\,ds_y,
   \qquad v\in H^{1/2}(\G)
\]
with unit normal vector $n$ exterior to $\Omega$,
\[
   L^2_0(\G):=\{v\in L^2(\G);\; \<v,1\>_\G=0\},
\]
and $\<\cdot,\cdot\>_\G$ refers to the $L^2(\G)$-inner product and its extension
to duality between $H^{-1/2}(\G)$ and $H^{1/2}(\G)$. The space $H^{1/2}(\G)$ is
the trace of $H^1(\Omega)$ and $H^{-1/2}(\G)$ its dual.

Problem \eqref{model} models, for appropriate right-hand side $f$,
the Laplace equation with Neumann boundary condition
within or exterior to $\Omega$. A standard variational formulation of \eqref{model} is
\be \label{weak}
   \phi\in H^{1/2}(\G)/\R:\qquad
   \<\CW\phi,\psi\>_\G = \<f, \psi\>_\G\quad\forall \psi\in H^{1/2}(\G).
\ee
The bilinear form of the hypersingular operator is usually implemented
by making use of the relation
\[
   \<\CW v,\psi\>_\G = \<\psi',\CV v'\>_\G\qquad (v,\psi\in H^{1/2}(\G))
\]
which can be written as a relation between linear functionals in $H^{-1/2}(\G)$ like
\be \label{WV}
   \CW v = -\bigl(\CV v'\bigr)'\qquad (v\in H^{1/2}(\G)),
\ee
cf.~\cite{Nedelec_82_IEN}, see also \cite{CostabelES_91_ECR} for details.
Here, $(\cdot)'$ denotes differentiation with respect to the arc length and
$\CV$ is the weakly-singular operator defined by
\[
   \CV v =
   -\frac{1}{2\pi} \int_\G v(y) \log|\cdot-y|\,ds_y,
   \qquad v\in H^{-1/2}(\G).
\]
There hold the mapping properties \cite{Costabel_88_BIO}
\be \label{V_cont}
   \CV:\; H^{s-1/2}(\G)\to H^{s+1/2}(\G),\qquad s\in [-1/2,1/2].
\ee
Here, the fractional order Sobolev spaces can be defined, e.g., by interpolation,
cf.~\cite{McLean_00_SES} for details.
Our DPG method will be based on an ultra-weak formulation of \eqref{model},
which we derive next.

Using \eqref{WV} we first introduce another unknown $\sigma$ to rewrite
\eqref{model} as the system
\be \label{system}
   \sigma=\CV\phi',\qquad -\sigma'=f.
\ee
This system is considered in a weak form. For the first equation, we test with
$\tau\in L^2(\G)$, use the symmetry of $\CV$ and integrate by parts to obtain
\be \label{sigma_weak}
   \<\sigma,\tau\>_\G = \<\phi',\CV\tau\>_\G = -\<\phi,(\CV\tau)'\>_\G.
\ee
Here we used that $\phi\in H^s(\G)$ for $s\in (1/2,1)$ \cite{StephanW_84_AGP} and
$\CV\tau\in H^1(\G)$ by \eqref{V_cont} so that both functions are continuous by
the Sobolev embedding theorem.

Now, for a weak form of the second identity in \eqref{system}, we test
with piecewise $H^1$-functions. To this end let $\CT$ be a mesh of elements $T$
on $\G$ (and with nodes $x_j$, $j=1,\ldots,N$, and the convention that $x_0:=x_N$)
which is compatible with the geometry (vertices of $\G$ are nodes), and define
\[
   H^1(\CT) = \{v\in L^2(\G);\; v|_T\in H^1(T)\ \forall T\in\CT\}
\]
with norm
\[
   \|v\|_{H^1(\CT)}
   :=
   \Bigl(\|v\|_{L^2(\G)}^2 + \|v'\|_{L^2(\CT)}^2\Bigr)^{1/2}
   :=
   \Bigl(\|v\|_{L^2(\G)}^2 + \sum_{T\in\CT} \|v'\|_{L^2(T)}^2\Bigr)^{1/2}.
\]
Below, we will also need the lengths of the shortest element, $h_{\min}$,
and of the longest element, $h_{\max}$.

For any $v\in H^1(\CT)$, the second identity in \eqref{system} and integration
by parts imply that
\be \label{f_weak}
   \<f,v\>_\G = -\<\sigma',v\>_\G
   = \sum_{T\in\CT} \<\sigma,v'\>_T + \sum_{j=1}^N \sigma(x_j)\jump{v}_j.
\ee
Here, $\jump{v}_j$ denotes the jump of $v$ at $x_j$. More precisely,
let $T_{j-1}$ and $T_j$ be the elements of $\CT$ that are before and after $x_j$,
respectively, in mathematically positive orientation of $\G$. Then
\[
   \jump{v}_j := v|_{T_j}(x_j) - v|_{T_{j-1}}(x_j).
\]
We also use the notation
\[
   \jump{v} := (\jump{v}_j)_{j=1}^N\in\R^N \qquad\text{for}\quad v\in H^1(\CT)
\]
and
\[
   \<\psi,v'\>_\CT := \sum_{T\in\CT} \<\psi,v'\>_T
   \qquad\text{for}\quad \psi\in L^2(\G),\; v\in H^1(\CT).
\]
Below, $\sigma$ in \eqref{f_weak} will be considered as an element of
$L^2(\G)$. Of course, nodal values of $\sigma$ are then not well defined.
New unknowns $\hat\sigma\in\R^N$ are introduced to replace them.

We now combine \eqref{sigma_weak} and \eqref{f_weak} to define our ultra-weak
formulation of \eqref{model}:
{\em Find $\sigma\in L^2(\G)$, $\phi\in L^2(\G)$ and $\hat\sigma\in\R^N$
such that}
\begin{alignat}{5}
   \<\sigma, \tau\>_\G &+ \<\phi, (\CV\tau)'\>_\G && &&= 0
      &\qquad&\forall\tau\in L^2(\G),
   \label{ultra1}\\
   \<\sigma, v'\>_\CT &+ \quad\hat\sigma\cdot\jump{v}
      &&+\<\phi,1\>_\G \<v,1\>_\G &&= \<f, v\>_\G
      &\qquad&\forall v\in H^1(\CT).
   \label{ultra2}
\end{alignat}
Here, we have added the rank-one term $\<\phi,1\>_\G \<v,1\>_\G$ to make
$\phi$ unique. Indeed, \eqref{sigma_weak} and \eqref{f_weak} have a kernel
consisting of constants on $\G$ with respect to $\phi$ and $v$, respectively.
Recall that $f\in L^2_0(\G)$.
The additional term selects $\phi$ with integral-mean zero.

Let us formulate the ultra-weak formulation \eqref{ultra1}, \eqref{ultra2} as:
{\em Find $(\sigma, \phi, \hat\sigma) \in L^2(\G)\times L^2(\G)\times\R^N$
such that}
\be \label{ultra}
   b(\phi,\sigma,\hat\sigma;\tau,v) = \<f,v\>_\G
   \qquad\forall (\tau,v)\in L^2(\G)\times H^1(\CT).
\ee
Here,
\[
   b(\phi,\sigma,\hat\sigma;\tau,v)
   := \<\phi, (\CV\tau)'\>_\G + \<\phi,1\>_\G \<v,1\>_\G
   + \<\sigma, \tau+v'\>_\CT + \hat\sigma\cdot\jump{v}.
\]
This formulation is designed so that subsequent Petrov-Galerkin methods provide
best approximations of $\sigma$ and $\phi$ in $L^2(\G)$. In particular, the solution
$\phi$ of \eqref{model} can be approximated by discontinuous functions whereas
conforming approximations (piecewise polynomials) based on \eqref{weak} must be continuous.
The Petrov-Galerkin bilinear form $b(\cdot,\cdot)$ and our interest to control
$\sigma$ and $\phi$ in $L^2(\G)$ suggest to consider the norm
\be \label{U}
   \|(\phi,\sigma,\hat\sigma)\|_{U,\alpha} :=
   \|\phi\|_{L^2(\G)} + \|\sigma\|_{L^2(\G)} + \alpha |\hat\sigma|
\ee
($|\cdot|$ is the Euclidean norm in $\R^N$ and $\alpha>0$ has to be selected)
in the solution space
\[
   U:=L^2(\G)\times L^2(\G)\times \R^N.
\]
The corresponding so-called optimal test norm in the test space
\[
   V:=L^2(\G)\times H^1(\CT)
\]
is
\begin{align} \label{Vopt}
   \|(\tau,v)\|_{V,{\rm opt},\alpha}
   &:=
   \sup_{(\phi,\sigma,\hat\sigma)\in U\setminus\{0\}}
   \frac {b(\phi,\sigma,\hat\sigma;\tau,v)}{\|(\phi,\sigma,\hat\sigma)\|_{U,\alpha}}
   \nonumber
   \\
   &\simeq
     \|(\CV\tau)'\|_{L^2(\G)}
   + \|\tau+v'\|_{L^2(\CT)}
   + \alpha^{-1} |\jump{v}| + |\<v,1\>_\G|.
\end{align}
The equivalence above is immediate from the definition of the bilinear form
$b(\cdot,\cdot)$, and we recall that $\|\tau+v'\|_{L^2(\CT)}$ indicates
that $v'$ is meant in a piecewise sense with respect to the mesh $\CT$.

\begin{remark}
Theorems~\rref{thm_equiv1} as well as \rref{thm_equiv2} (shown below) confirm that
\eqref{Vopt} is indeed a norm. This is essential for proving stability
of the ultra-weak formulation and for the analysis of the DPG method.
\end{remark}

The following theorem is one of our main results and forms the basis for
our DPG method with optimal test functions to solve the hypersingular integral
equation \eqref{model}.

\begin{theorem} \label{thm_ultra}
There exists a unique solution to the ultra-weak formulation
\eqref{ultra} which is stable in the sense that
\be \label{stab}
   \|(\phi,\sigma,\hat\sigma)\|_{U,\alpha}
   \lesssim
   \|f\|_{L^2(\G)}
   \qquad\text{with } \alpha=N^{-1/2}.
\ee
Furthermore, the ultra-weak formulation and
the standard weak formulation \eqref{weak} are equivalent. More precisely,
if $\phi_\R$ solves \eqref{weak} then, with $|\G|$ being the length of $\G$,
$\sigma:=\CV\phi_\R'$, $\phi:=\phi_\R-|\G|^{-1}\<\phi_\R,1\>_\G$ and
$\hat\sigma:=(\CV\phi_\R'(x_j))_{j=1}^N$ solve \eqref{ultra}.
If $(\phi,\sigma,\hat\sigma)$ solves the ultra-weak formulation then $\phi$
solves \eqref{weak}.
\end{theorem}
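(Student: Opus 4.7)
My plan is to reduce the theorem to the classical well-posedness of \eqref{weak}: existence and uniqueness of the ultra-weak solution will follow from the explicit equivalence described in the theorem statement, and the stability bound \eqref{stab} will be read off from the regularity of the standard weak solution. I would begin with the forward implication (\eqref{weak}$\Rightarrow$\eqref{ultra}). Given $\phi_\R\in H^{1/2}(\G)/\R$ solving \eqref{weak}, I would invoke the regularity already used before \eqref{sigma_weak}, namely $\phi_\R\in H^s(\G)/\R$ for some $s\in(1/2,1)$ with $\|\phi_\R\|_{H^s(\G)/\R}\lesssim\|f\|_{L^2(\G)}$ (cf.~\cite{StephanW_84_AGP}). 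By \eqref{V_cont} this yields $\sigma:=\CV\phi_\R'\in H^s(\G)$, which by Sobolev embedding is continuous, so the nodal values $\hat\sigma_j:=\sigma(x_j)$ are well defined. Equation \eqref{ultra1} then becomes the identity \eqref{sigma_weak} tested against $\tau\in L^2(\G)$, and \eqref{ultra2} is the elementwise integration by parts \eqref{f_weak}, where the interface contributions reassemble into $\hat\sigma\cdot\jump{v}$ because $\sigma$ is continuous; the rank-one term vanishes since $\phi$ has been shifted to mean zero.

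For the stability bound I would carry the regularity estimate through: $\|\phi\|_{L^2(\G)}\le\|\phi_\R\|_{L^2(\G)}\lesssim\|f\|_{L^2(\G)}$ and $\|\sigma\|_{L^2(\G)}\le\|\sigma\|_{H^s(\G)}\lesssim\|f\|_{L^2(\G)}$. The embedding $H^s(\G)\hookrightarrow C(\G)$ gives $|\hat\sigma_j|\lesssim\|\sigma\|_{H^s(\G)}$ uniformly in $j$, whence $|\hat\sigma|\lesssim N^{1/2}\|f\|_{L^2(\G)}$; this is exactly absorbed by the weight $\alpha=N^{-1/2}$, producing \eqref{stab}. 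Next I would prove the reverse implication (\eqref{ultra}$\Rightarrow$\eqref{weak}) by bootstrap. Testing \eqref{ultra2} with $v\equiv 1$ kills $v'$ and $\jump{v}$ and, combined with $\<f,1\>_\G=0$, forces $\<\phi,1\>_\G=0$. Testing \eqref{ultra2} next with arbitrary globally continuous $v\in H^1(\G)\subset H^1(\CT)$ gives $\<\sigma,v'\>_\G=\<f,v\>_\G$, i.e.\ $\sigma'=-f$ distributionally, so $\sigma\in H^1(\G)$. Finally, \eqref{ultra1} reads $\<\phi,(\CV\tau)'\>_\G=-\<\sigma,\tau\>_\G$ for all $\tau\in L^2(\G)$; the symmetry of $\CV$ and the isomorphism $\CV:H^{-1/2}(\G)\to H^{1/2}(\G)$ (coercivity via cap$(\G)<1$) promoted through \eqref{V_cont} to an isomorphism $\CV:L^2(\G)\to H^1(\G)$ modulo constants translate this into $\phi'=\CV^{-1}\sigma\in L^2(\G)$ weakly. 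Hence $\phi\in H^1(\G)\subset H^{1/2}(\G)$ and $\sigma=\CV\phi'$; combining with $\sigma'=-f$ and \eqref{WV} gives $\CW\phi=f$, which is \eqref{weak}. Uniqueness of the ultra-weak solution is then automatic, since the reverse implication pins $\phi$ down as the unique mean-zero weak solution, and \eqref{ultra1}, \eqref{ultra2} fix $\sigma$ and $\hat\sigma$.

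The step I expect to be most delicate is the regularity bootstrap in the reverse direction: starting from only $\phi,\sigma\in L^2(\G)$ I need to extract enough smoothness for $\phi$ to enter the trial space of \eqref{weak}. This rests on precise mapping properties of $\CV$ on the polygon, in particular promoting the $H^{-1/2}\to H^{1/2}$ isomorphism to an $L^2\to H^1$ one modulo constants so that the identity $\<\phi,(\CV\tau)'\>_\G=-\<\sigma,\tau\>_\G$ can be converted into a genuine weak-derivative identification $\phi'=\CV^{-1}\sigma$. The remainder is careful bookkeeping with integration by parts on the closed curve $\G$ and on the mesh $\CT$, together with Sobolev embedding at the nodes.
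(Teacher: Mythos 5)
Your proposal is correct but runs along a genuinely different track from the paper's proof. The paper argues abstractly: it shows the energy norm \eqref{E} is definite (injectivity of $B$), verifies the adjoint injectivity \eqref{pos}, and bounds the right-hand-side functional in the optimal test norm via the Poincar\'e--Friedrichs estimates \eqref{PF2} and \eqref{equiv2_2}; Babu\v{s}ka--Brezzi theory then gives existence, uniqueness and stability in the energy norm, which is transferred to the $U$-norm through the isometry \eqref{EU}. Equivalence with \eqref{weak} is obtained there from the forward construction plus uniqueness of \emph{both} problems, so the paper never needs your reverse bootstrap. You instead build the solution explicitly from the weak solution, read \eqref{stab} off the regularity $\phi_\R\in H^1(\G)$, $\sigma=\CV\phi_\R'\in H^1(\G)\hookrightarrow C^0(\G)$ --- which makes the weight $\alpha=N^{-1/2}$ completely transparent ($N$ nodal values, each bounded by $\|f\|_{L^2(\G)}$) --- and derive uniqueness from the reverse implication. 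Both routes are sound; yours is more elementary and self-contained for this theorem, but it does not produce the identity \eqref{EU} and the isometric-isomorphism property of $B$, which are the actual inputs to Corollary~\ref{cor_equiv} and Theorem~\ref{thm_DPG}, so the paper's detour through the energy norm is not wasted effort. One step of yours should be stated more carefully: the bootstrap works with $\CV:L^2(\G)\to H^1(\G)$ as a bounded \emph{bijection} (coercivity from ${\rm cap}(\G)<1$ plus the regularity shift of \cite[Theorem 3]{Costabel_88_BIO}), not ``modulo constants'' --- the latter is the normalization of \cite[Lemma 8.14]{McLean_00_SES} and would leave an additive constant in $\CV\phi'-\sigma$ (harmless for concluding $\CW\phi=f$, but avoidable). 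With $\mu:=\CV^{-1}\sigma\in L^2(\G)$ and the symmetry of $\CV$, \eqref{ultra1} becomes $\<\phi,w'\>_\G=-\<\mu,w\>_\G$ for all $w\in H^1(\G)$, which is exactly the weak-derivative identity on the closed curve and yields $\phi\in H^1(\G)$ with $\CV\phi'=\sigma$ cleanly.
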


\begin{proof}
For the time being, let us define the so-called energy norm in $U$ by
\be \label{E}
   \|(\phi,\sigma,\hat\sigma)\|_{E,\alpha}
   :=
   \sup_{(\tau,v)\in V\setminus\{0\}}
   \frac {b(\phi,\sigma,\hat\sigma;\tau,v)}{\|(\tau,v)\|_{V,\mathrm{opt},\alpha}}.
\ee
First we show that this is indeed a norm, i.e.,
that $\|(\phi,\sigma,\hat\sigma)\|_{E,\alpha}=0$ implies $(\phi,\sigma,\hat\sigma)=0$.
Then existence, uniqueness and stability of the ultra-weak solution
in this norm follow by standard Babu\v{s}ka-Brezzi theory, by showing that
\begin{align}
   \label{bounded}
   b(\cdot,\cdot):\; &
   (U, \|\cdot\|_{E,\alpha})\times (V, \|\cdot\|_{V,\mathrm{opt},\alpha})
   \to \R \quad\text{is bounded},\\
   \label{infsup}
   \sup_{(\tau,v)\in V\setminus\{0\}}
   & \frac {b(\phi,\sigma,\hat\sigma;\tau,v)}{\|(\tau,v)\|_{V,\mathrm{opt},\alpha}}
   \gtrsim \|(\phi,\sigma,\hat\sigma)\|_{E,\alpha} \qquad\forall (\phi,\sigma,\hat\sigma)\in U,\\
   \label{pos}
   \sup_{(\phi,\sigma,\hat\sigma)\in U\setminus\{0\}}
   & \frac {b(\phi,\sigma,\hat\sigma;\tau,v)}{\|(\phi,\sigma,\hat\sigma)\|_{E,\alpha}}
   > 0 \qquad\forall (\tau,v)\in V\setminus\{0\},
\end{align}
and verifying boundedness of the right-hand side functional.
\begin{enumerate}
\item We show that $\|\cdot\|_{E,\alpha}$ is a norm.
Let $(\phi,\sigma,\hat\sigma)\in U$ be such that
$b(\phi,\sigma,\hat\sigma;\tau,v)=0$ for any $(\tau,v)\in V$.
Selecting $v=1$ it is clear that $\<\phi,1\>_\G=0$.
From \eqref{ultra2} we conclude that, for any $T\in\CT$, there holds
\[
   \<\sigma, v'\>_T = 0\qquad\forall v\in H^1_0(T).
\]
Therefore, $\sigma'=0$ in distributional sense on every element, i.e., $\sigma\in H^1(\CT)$.
Now let $T\in\CT$ be given with endpoints $x_j$ and $x_{j+1}$. Considering $v\in H^1(\CT)$
with support in $\bar T$, using \eqref{ultra2}, and integrating by parts, we obtain
\[
   \bigl(\sigma(x_{j+1})-\hat\sigma_{j+1}\bigr)v(x_{j+1}) -
   \bigl(\sigma(x_j)-\hat\sigma_j\bigr)v(x_j) = 0
   \qquad\forall v\in H^1(T).
\]
This implies $\sigma(x_j)=\hat\sigma_j$, $j=1,\ldots,N$, i.e., $\sigma\in H^1(\G)$ and
$\sigma$ is a constant.

Now, $\sigma$ being a constant, \eqref{ultra1} implies that
\[
   \<\phi,(\CV\tau)'\>_\G = 0 \qquad\forall\tau\in L^2_0(\G).
\]
By \cite[Lemma 8.14]{McLean_00_SES} there exists for any $\Psi\in H^1(\G)$
a unique $\tau\in H^{-1/2}(\G)$ and $a\in\R$
such that $\<\tau,1\>_\G=0$ and $\CV\tau=\Psi+a$.
Moreover, by \cite[Theorem 3]{Costabel_88_BIO}, $\tau\in L^2(\G)$.
Therefore, since the derivative operator $(\cdot)'$ maps $H^1(\G)$ onto
$L^2_0(\G)$, the mapping
\[
   (\CV\cdot)':\; L^2_0(\G)\to L^2_0(\G)
\]
is onto. We conclude that
\[
   \<\phi,\psi\>_\G = 0
   \qquad\forall\psi\in L^2_0(\G)
\]
so that, since $\<\phi,1\>_\G=0$, $\phi=0$.
Then by \eqref{ultra1}, $\<\sigma,\tau\>_\G=0$ for any $\tau\in L^2(\G)$ so that $\sigma=0$.
Since $\sigma=\hat\sigma_j$, $j=1,\ldots,N$,
this proves that $(\sigma,\phi,\hat\sigma)=0$, i.e. uniqueness of a
solution to the ultra-weak formulation and definiteness of the energy norm.
\item 
The properties \eqref{bounded} and \eqref{infsup} (with constant $1$) are immediate by
the definition of the energy norm \eqref{E}.
\item We show \eqref{pos} by proving that
\be \label{pf_pos}
   (\tau,v)\in V:\quad b(\phi,\sigma,\hat\sigma; \tau, v)=0
   \quad\forall (\phi,\sigma,\hat\sigma)\in U
\ee
implies that $(\tau,v)=0$.

Indeed, testing in \eqref{pf_pos} separately with $\phi=1$ (and $\sigma=0$, $\hat\sigma=0$),
then with $\sigma\in L^2(\G)$, and $\hat\sigma\in\R^N$ (other functions zero) we find that
\[
   \<v,1\>_\G=0,\quad \tau+v'=0\ \text{in}\ L^2(\CT),
                \quad \jump{v}=0
\]
so that, in particular, $v\in H^1(\G)$ and $v'=-\tau$. Then, testing in \eqref{pf_pos} with
$\phi=v$ and integrating by parts, we obtain
\[
   0=\<v,(\CV\tau)'\>_\G = -\<v',\CV\tau\>_\G = \<\tau,\CV\tau\>_\G.
\]
Since $\<\tau,1\>_\G=-\<v',1\>_\G=0$ and since $\CV$ is elliptic on the subspace
of $H^{-1/2}(\G)$ functionals with integral-mean zero \cite[Theorem 8.12]{McLean_00_SES},
we conclude that $\tau=0$. By the previous relations for $v$ we also obtain $v=0$.
This proves \eqref{pos}.
\item
Now we verify boundedness of the linear functional
$V\ni(\tau,v)\mapsto \<f,v\>_\G$ with respect
to the optimal test norm when $\alpha=N^{-1/2}$. 
To this end we make use of the bounds \eqref{PF2} and \eqref{equiv2_2} below
(the norm $\|\cdot\|_{V,2}$ is defined in \eqref{V2}) to deduce that
\[
   \|v\|_{L^2(\G)} \lesssim \|(0,v)\|_{V,\mathrm{opt},\alpha} \quad\forall v\in H^1(\CT)
\]
holds for $\alpha=N^{-1/2}$. Therefore, since $f\in L^2(\G)$ by assumption,
the linear functional is bounded as wanted.
\end{enumerate}

In conclusion we obtain stability in the form
\be \label{pf_stab}
   \|(\phi,\sigma,\hat\sigma)\|_{E,\alpha} \lesssim \|f\|_{L^2(\G)}
   \qquad (\alpha=N^{-1/2}).
\ee
Let us also note that, as consequence of properties \eqref{bounded}--\eqref{pos}
(with boundedness and inf-sup constants $1$), the operator
\[
   B:\; \left\{\begin{array}{clc}
      (U, \|\cdot\|_{E,\alpha}) & \to &(V, \|\cdot\|_{V,\mathrm{opt},\alpha})'\\
      (\phi,\sigma,\hat\sigma)  & \mapsto & b(\phi,\sigma,\hat\sigma; \cdot)
   \end{array}\right.
\]
is an isometric isomorphism, and so is
\[
  B':\; (V, \|\cdot\|_{V,\mathrm{opt},\alpha}) \to (U, \|\cdot\|_{E,\alpha})'
\]
when identifying $V''$ and $V$, cf.~\cite{Demkowicz_06_BB}.
This in turn implies (see~\cite{ZitelliMDGPC_11_CDP} for details,
in particular Proposition~2.1) that the norms $\|\cdot\|_{U,\alpha}$ and $\|\cdot\|_{E,\alpha}$
in $U$ are identical, that is
\be \label{EU}
   \|(\phi,\sigma,\hat\sigma)\|_{U,\alpha}
   =
   \sup_{(\tau,v)\in V\setminus\{0\}}
   \frac {b(\phi,\sigma,\hat\sigma;\tau,v)}{\|(\tau,v)\|_{V,\mathrm{opt},\alpha}}
   \qquad\forall (\phi,\sigma,\hat\sigma)\in U.
\ee
Together with \eqref{pf_stab} this finishes the proof of stability \eqref{stab}.

It is left to show the equivalence of the ultra-weak formulation \eqref{ultra}
and the standard weak form \eqref{weak}.
It is well known that there exists a solution $\phi$ of \eqref{weak} which is
unique in $H^{1/2}(\G)/\R$. Moreover, since $f\in L^2(\G)$, it has regularity
$\phi\in H^1(\G)$, cf.~\cite[Theorem 3]{Costabel_88_BIO}.
Therefore, by the continuity \eqref{V_cont} of $\CV$ there holds
$\sigma:=\CV\phi'\in H^1(\G)$, and $(\sigma,\phi)$ solves \eqref{system}.
Then, following the derivation of \eqref{ultra1}, \eqref{ultra2}, and
defining $\hat\sigma:=(\sigma(x_j))_{j=1}^N$, the triple
$(\sigma,\phi-|\G|^{-1}\<\phi,1\>_\G,\hat\sigma)$ solves the ultra-weak formulation.

Since the standard weak and ultra-weak formulations are both uniquely solvable
their equivalence follows.
\end{proof}

\section{Test norms} \label{sec_norms}
\setcounter{equation}{0} \setcounter{figure}{0} \setcounter{table}{0}

For a practical implementation of the DPG method
the norm $\|\cdot\|_{V,{\rm opt},\alpha}$ is difficult to handle
since the problems for $\tau$ and $v$ do not decouple.
Instead we consider norms which are easier to implement since they decouple and are local:
\begin{align}
  \label{V1}
  \|(\tau,v)\|_{V,1} &:= \|\tau\|_{L^2(\G)} + \|v\|_{H^1(\CT)},
  \\
  \label{V2}
  \|(\tau,v)\|_{V,2}
  &:=
  \|\tau\|_{L^2(\G)} + \|v'\|_{L^2(\CT)} + |\hat v|_h.
\end{align}
Here, $|\hat v|_h$ is the weighted Euclidean norm of point values of $v$ defined by
\be \label{Euclid}
   |\hat v|_h := \Bigl(\sum_{T\in\CT} |T| (v|_T(x_T))^2\Bigr)^{1/2}
\ee
and $x_T$ denotes the endpoint of $T$ which comes first in
mathematically positive orientation of $\G$ (the other endpoint would be fine as well).

The norm $\|\cdot\|_{V,1}$ is the natural choice in $V$ and allows for efficient
numerical approximation of the optimal test functions.
In the current one-dimensional setting, the norm $\|\cdot\|_{V,2}$ even allows
for partially analytical computation of optimal test functions,
see Section~\ref{sec_test} below for details.

As before, the norms in $V$ define corresponding energy norms
in the solution space $U$:
\be \label{duality}
   \|(\phi,\sigma,\hat\sigma)\|_{E,i} :=
   \sup_{(\tau,v)\in V\setminus\{0\}}
   \frac {b(\phi,\sigma,\hat\sigma;\tau,v)}{\|(\tau,v)\|_{V,i}},
   \quad i=1,2.
\ee
In this section we analyze equivalence of the different norms in $V$, then
giving equivalences of different norms in $U$ which are dual to the ones in $V$.

\subsection{Technical results} \label{sec_tech}

As is known from previous papers on the DPG method with optimal test functions,
norm equivalences for trial spaces correspond to norm equivalences in test spaces.
One side of these equivalences reduce to stability analyses of dual problems.
Such stability results are provided in this subsection
(Lemmas~\ref{la_c} and \ref{la_0}). The final Lemma~\ref{la_PF} is a
Poincar\'e-Friedrichs estimate for piecewise $H^1$-functions, and is also
needed for the proof of norm equivalences.

\begin{lemma} \label{la_c}
For given $g_1, g_2\in L^2(\G)$ the system
\be \label{vtau_c}
   \tau+v'=g_1\quad\text{on}\quad\G,\qquad
   (\CV\tau)'=g_2\quad\text{on}\quad\G
\ee
has a unique solution $(\tau,v)\in L^2(\G)\times H^1(\G)/\R$
and there holds
\[
   \|\tau\|_{L^2(\G)} + \|v'\|_{L^2(\G)}
   \lesssim
   \|g_1\|_{L^2(\G)} + \|g_2\|_{L^2(\G)}.
\]
\end{lemma}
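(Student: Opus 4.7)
The plan is to decouple the system: solve the second equation for $\tau$ using the known invertibility of $\mathcal{V}$ on mean-zero subspaces, and then recover $v$ as an antiderivative of $g_1-\tau$.

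Before doing anything I would read off the two compatibility conditions implied by $\G$ being closed. Integrating the first equation of \eqref{vtau_c} yields $\<\tau,1\>_\G=\<g_1,1\>_\G$ because any $v\in H^1(\G)$ has $\int_\G v'\,ds=0$, and integrating the second yields $\<g_2,1\>_\G=0$ because $\mathcal{V}\tau\in H^1(\G)$ by \eqref{V_cont}. The first identity tells me that the constant part of $\tau$ is pinned down by $g_1$, while the second is a genuine constraint on the data (tacit in the statement, but satisfied in the applications of the lemma).

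Accordingly I would split $\tau=\tau_0+c$ with $c:=|\G|^{-1}\<g_1,1\>_\G$ and $\<\tau_0,1\>_\G=0$, so $|c|\lesssim\|g_1\|_{L^2(\G)}$. The second equation rewrites as
\[
   (\mathcal{V}\tau_0)' \;=\; g_2-c(\mathcal{V}1)' \;=:\; h,
\]
with $h\in L^2(\G)$ of zero mean and $\|h\|_{L^2(\G)}\lesssim\|g_1\|_{L^2(\G)}+\|g_2\|_{L^2(\G)}$, since $\mathcal{V}1\in H^1(\G)$ is a fixed geometric datum. Pick any antiderivative $H\in H^1(\G)$ of $h$. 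Exactly as in the proof of Theorem~\ref{thm_ultra}, \cite[Lemma 8.14]{McLean_00_SES} produces a unique pair $(\tau_0,a)\in H^{-1/2}(\G)\times\R$ with $\<\tau_0,1\>_\G=0$ and $\mathcal{V}\tau_0=H+a$, whence $(\mathcal{V}\tau_0)'=h$, and \cite[Theorem 3]{Costabel_88_BIO} upgrades $\tau_0$ to $L^2(\G)$. The boundedness of the inverse can be read off from $\mathcal{V}\colon L^2(\G)\to H^1(\G)$ being bounded and bijective (\eqref{V_cont} together with cap$(\G)<1$) and from the Poincar\'e inequality on $H^1(\G)/\R$, giving $\|\tau_0\|_{L^2(\G)}\lesssim\|h\|_{L^2(\G)}$ and hence the claimed bound on $\tau=\tau_0+c$.

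To close the argument, set $v':=g_1-\tau\in L^2(\G)$: by the choice of $c$ it has zero mean, so it admits an antiderivative $v\in H^1(\G)/\R$, uniquely determined modulo constants, with $\|v'\|_{L^2(\G)}\le\|g_1\|_{L^2(\G)}+\|\tau\|_{L^2(\G)}$. Uniqueness of the pair is obtained by running the same construction with $g_1=g_2=0$: the forcing $c=0$ and the uniqueness in McLean's lemma give $\tau_0=0$, hence $v'=0$. The main obstacle is precisely the coupling between the two equations: the mean of $\tau$ is dictated by the first equation, whereas McLean--Costabel inversion only acts on the mean-zero complement where $(\mathcal{V}\cdot)'$ is surjective. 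The splitting $\tau=\tau_0+c$ together with the compensating term $c(\mathcal{V}1)'$ is what bridges these two roles and makes the standard invertibility machinery applicable.
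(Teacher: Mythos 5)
Your proof is correct, but it takes a genuinely different route from the paper's. The paper eliminates $\tau$ rather than $v$: substituting $\tau=g_1-v'$ into the second equation of \eqref{vtau_c} and invoking \eqref{WV} collapses the system into the single hypersingular equation $\CW v=g_2-(\CV g_1)'\in L^2(\G)$, whose unique solvability in $H^{1/2}(\G)/\R$ together with the regularity bound $|v|_{H^1(\G)}\lesssim\|g_1\|_{L^2(\G)}+\|g_2\|_{L^2(\G)}$ is quoted from \cite{Costabel_88_BIO}; the function $\tau:=g_1-v'$ is then controlled by the triangle inequality. You go the other way around, solving for $\tau$ first through the weakly singular operator, which obliges you to track the mean of $\tau$ by hand (the splitting $\tau=\tau_0+c$ and the corrected datum $h=g_2-c(\CV 1)'$) before integrating $g_1-\tau$ to recover $v$. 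Both arguments rest on the same underlying regularity theory and both are sound. The paper's version is shorter because the mean-value bookkeeping is absorbed into the known mapping properties of $\CW$ on $H^{1/2}(\G)/\R$; your version has the merit of making explicit the compatibility condition $\<g_2,1\>_\G=0$, which is indeed tacitly required by the lemma (the paper's equation $\CW v=g_2-(\CV g_1)'$ is solvable only when the right-hand side annihilates constants, and in the lemma's application $g_2=(\CV\tau)'$ so the condition holds automatically), and of using only the $\CV$-toolkit (\cite[Lemma 8.14]{McLean_00_SES} plus Costabel's regularity), i.e., the same ingredients as Lemma~\ref{la_0}. The one place where you are terse is the estimate $\|\tau_0\|_{L^2(\G)}\lesssim\|h\|_{L^2(\G)}$: the cleanest justification is that $(\CV\cdot)':L^2_0(\G)\to L^2_0(\G)$ is a bounded bijection (as established in the proof of Theorem~\ref{thm_ultra}) and therefore has a bounded inverse; your appeal to the bijectivity of $\CV:L^2(\G)\to H^1(\G)$ combined with a Poincar\'e inequality amounts to the same thing once the additive constant $a$ is bounded via the constraint $\<\tau_0,1\>_\G=0$, so this is a matter of detail rather than a gap.
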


\begin{proof}
Using \eqref{WV}, combining both relations in \eqref{vtau_c} and
taking into account the continuity \eqref{V_cont} of $\CV$
for $s=1/2$, we find that
\[
   \CW v = -(\CV v')' = g_2 - (\CV g_1)'\in L^2(\G).
\]
This equation has a unique solution $v\in H^{1/2}(\G)/\R$ with regularity
\[
   |v|_{H^1(\G)} \lesssim \|g_2\|_{L^2(\G)} + \|(\CV g_1)'\|_{L^2(\G)}
                 \lesssim \|g_1\|_{L^2(\G)} + \|g_2\|_{L^2(\G)},
\]
cf. \cite{Costabel_88_BIO}. Here we used again the continuity of $\CV$.
The function $\tau$ is uniquely defined by \eqref{vtau_c} and
by the triangle inequality we can bound
\[
   \|\tau\|_{L^2(\G)} = \|g_1-v'\|_{L^2(\G)}
   \lesssim \|g_1\|_{L^2(\G)} + \|g_2\|_{L^2(\G)}.
\]
\end{proof}

\begin{lemma} \label{la_0}
Any solution $(\tau,v)\in L^2(\G)\times H^1(\CT)$ of
\be \label{vtau_0}
   \tau+v' = 0    \quad\text{in}\quad L^2(\CT),\qquad
   (\CV\tau)' = 0   \quad\text{in}\quad L^2(\G)
\ee
satisfies
\[
   \|\tau\|_{L^2(\G)} + \|v'\|_{L^2(\CT)} \lesssim \sqrt{N} |\jump{v}|.
\]
\end{lemma}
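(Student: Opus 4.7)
The plan is to exploit that the constraint $(\CV\tau)'=0$ forces $\tau$ to lie in a one-dimensional subspace of $L^2(\G)$, and then to control its scalar coefficient via a boundary identity that ties $\<\tau,1\>_\G$ to the vector of jumps of $v$ through the first equation $\tau+v'=0$.

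First I would note that $(\CV\tau)'=0$ on the closed curve $\G$ means $\CV\tau$ is constant. By \eqref{V_cont} at $s=1/2$, we have $\CV^{-1}\colon H^1(\G)\to L^2(\G)$, so $\omega:=\CV^{-1}(1)$ is well defined and lies in $L^2(\G)$; combined with invertibility of $\CV$ on $H^{-1/2}(\G)$ (guaranteed by $\mathrm{cap}(\G)<1$), this yields the representation $\tau=c\,\omega$ for a unique $c\in\R$.

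Next I would derive the identity
\[
   \<\tau,1\>_\G \;=\; \sum_{j=1}^N \jump{v}_j.
\]
This follows from $\tau=-v'$ on each element: on $T_j$ with endpoints $x_j,x_{j+1}$, integration by parts gives $\int_{T_j}\tau\,ds = v|_{T_j}(x_j)-v|_{T_j}(x_{j+1})$, and summing cyclically around the closed curve $\G$ the one-sided nodal values rearrange into $\sum_j (v|_{T_j}(x_j)-v|_{T_{j-1}}(x_j))=\sum_j \jump{v}_j$. Substituting $\tau=c\,\omega$ gives $c\<\omega,1\>_\G = \sum_j \jump{v}_j$, and Cauchy-Schwarz yields $|c|\lesssim \sqrt{N}\,|\jump{v}|/|\<\omega,1\>_\G|$. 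The desired bound then follows from $\|\tau\|_{L^2(\G)}=|c|\,\|\omega\|_{L^2(\G)}$ together with $\|v'\|_{L^2(\CT)}=\|\tau\|_{L^2(\G)}$, which comes directly from the first equation.

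The one point that really needs justification is $\<\omega,1\>_\G\neq 0$, and this is the main subtle step. I would handle it by writing $\<\omega,1\>_\G = \<\omega,\CV\omega\>_\G$ and invoking the $H^{-1/2}(\G)$-coercivity of $\CV$, which holds precisely under the standing hypothesis $\mathrm{cap}(\G)<1$: since $\omega\neq 0$ (because $\CV\omega=1$), this pairing is strictly positive. Everything else is a direct cyclic telescoping and Cauchy-Schwarz, so once the coercive lower bound on $\<\omega,1\>_\G$ is in hand the $\sqrt{N}$-factor emerges naturally from passing between $\ell^1$ and $\ell^2$ norms on the jump vector.
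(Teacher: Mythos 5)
Your proof is correct, but it is organized quite differently from the paper's. The paper works with a priori estimates: it uses the $H^{-1/2}(\G)$-ellipticity of $\CV$ to write $\|\tau\|_{H^{-1/2}(\G)}^2\simeq\<\tau,\CV\tau\>_\G$, integrates by parts to turn this into $\jump{v}\cdot(\CV\tau(x_j))_j$, controls the nodal values by $\|\CV\tau\|_{C^0(\G)}\lesssim\|\tau\|_{L^2(\G)}$ via Sobolev embedding and \eqref{V_cont}, and then closes a quadratic inequality in $\|\tau\|_{L^2(\G)}$ using the regularity bound $\|\tau\|_{L^2(\G)}\lesssim\|\tau\|_{H^{-1/2}(\G)}+\|a\|_{L^2(\G)}$. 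You instead observe that the constraint $(\CV\tau)'=0$ confines $\tau$ to the fixed one-dimensional space $\mathrm{span}\{\omega\}$ with $\omega=\CV^{-1}(1)$, so the whole estimate reduces to bounding one scalar through the telescoping identity $\<\tau,1\>_\G=\jump{v}\cdot\one$ (the same identity the paper uses) and the single fact $\<\omega,1\>_\G=\<\omega,\CV\omega\>_\G>0$. Your route is more elementary — it avoids the $C^0$ embedding step and the quadratic-inequality trick — and it makes the constant explicit as $\|\omega\|_{L^2(\G)}/\<\omega,1\>_\G$, a purely geometric quantity; the $\sqrt N$ arises in both arguments from the same $\ell^1$--$\ell^2$ passage on the jump vector. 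One small citation slip: the continuity statement \eqref{V_cont} alone does not give $\CV^{-1}\colon H^1(\G)\to L^2(\G)$; you need the regularity (shift) theorem for the inverse, which is the result from \cite{Costabel_88_BIO} that the paper invokes at the same point — and in fact, since $\tau\in L^2(\G)$ is part of the hypothesis, you could dispense with the regularity of $\omega$ entirely by noting that either $c=0$ (nothing to prove) or $\omega=\tau/c\in L^2(\G)$.
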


\begin{proof}
The relations \eqref{vtau_0} mean that $a:=\CV\tau$ is a constant and that
\[
   \<\tau,1\>_\G = -\<v',1\>_\CT = \jump{v}\cdot\one
   \qquad\text{with}\quad \one:=(1,\ldots,1)\in\R^N.
\]
Therefore, $\tau$ and $a$ solve
\[
   \tau\in H^{-1/2}(\G),\; a\in\R:\quad
   \CV\tau-a = 0,\quad \<\tau,1\>_\G = \jump{v}\cdot\one.
\]
According to \cite[Lemma 8.14]{McLean_00_SES} its solution is unique.
Furthermore, ellipticity regularity shows that $\tau\in L^2(\G)$. We now
bound $\|\tau\|_{L^2(\G)}$.

The function $\tau$ solves $\CV\tau=a$. Regularity for $\CV$ shows that
\be \label{pf_reg_1}
   \|\tau\|_{L^2(\G)} \lesssim \|\tau\|_{H^{-1/2}(\G)} + \|a\|_{H^1(\G)}
                      = \|\tau\|_{H^{-1/2}(\G)} + \|a\|_{L^2(\G)},
\ee
see \cite{Costabel_88_BIO}. We bound the last two terms. By continuity of $\CV$
(see \eqref{V_cont} with $s=-1/2$)
and the continuous injection $H^{-1/2}(\G)\subset H^{-1}(\G)$ there holds
\be \label{pf_reg_2}
   \|a\|_{L^2(\G)} = \|\CV\tau\|_{L^2(\G)} \lesssim \|\tau\|_{H^{-1}(\G)}
   \lesssim \|\tau\|_{H^{-1/2}(\G)}.
\ee
We are left to bound $\|\tau\|_{H^{-1/2}(\G)}$.
Since cap$(\G)<1$ by assumption, $\CV$ is $H^{-1/2}(\G)$-elliptic,
see \cite[Theorem 8.16]{McLean_00_SES}. Therefore,
by relations \eqref{vtau_0} and integration by parts,
\be \label{pf_reg_3}
   \|\tau\|_{H^{-1/2}(\G)}^2 \simeq \<\tau,\CV\tau\>_\G
   = -\<v',\CV\tau\>_\CT = \jump{v}\cdot (\CV\tau(x_j))_{j=1}^N
   \lesssim |\jump{v}| |\one| \|\CV\tau\|_{C^0(\G)}.
\ee
By the Sobolev embedding theorem and the continuity of $\CV$
(see \eqref{V_cont} with $s=1/2$) there holds
\be \label{pf_reg_4}
   \|\CV\tau\|_{C^0(\G)} \lesssim \|\CV\tau\|_{H^1(\G)}
   \lesssim \|\tau\|_{L^2(\G)}.
\ee
Combination of \eqref{pf_reg_1}--\eqref{pf_reg_4} shows that
\[
   \|\tau\|_{L^2(\G)}^2 \lesssim \|\tau\|_{H^{-1/2}(\G)}^2
   \lesssim |\jump{v}| |\one| \|\tau\|_{L^2(\G)}
   = \sqrt{N} |\jump{v}| \|\tau\|_{L^2(\G)}.
\]
This estimate yields also an estimate for $v$:
\[
   \|v'\|_{L^2(\CT)} = \|\tau\|_{L^2(\G)} \lesssim \sqrt{N} |\jump{v}|.
\]
This proves the lemma.
\end{proof}

\begin{lemma} \label{la_PF}
There holds
\[
   \|v\|_{L^2(\G)}
   \lesssim
   \|v'\|_{L^2(\CT)} + \sqrt{N} |\jump{v}| + |\<v,1\>_\G|
   \qquad\forall v\in H^1(\CT).
\]
\end{lemma}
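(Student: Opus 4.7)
The plan is to reduce the estimate to control of the right-limit point values $a_j := v|_{T_j}(x_j)$, i.e.~essentially the weighted Euclidean norm $|\hat v|_h$ from \eqref{Euclid}. By the fundamental theorem of calculus, for any $x\in T_j$ one has $v(x)=a_j+\int_{x_j}^x v'(y)\,dy$, which gives
\[
   \|v\|_{L^2(T_j)}^2 \lesssim |T_j|\,a_j^2 + |T_j|^2\,\|v'\|_{L^2(T_j)}^2.
\]
Summing over $j$ and using the (mesh-independent) bound $|T_j|\le|\G|=O(1)$, one obtains
\[
   \|v\|_{L^2(\G)}^2 \lesssim \sum_{j=1}^N |T_j|\,a_j^2 + \|v'\|_{L^2(\CT)}^2,
\]
so it remains to estimate the first sum by the square of the right-hand side in the lemma.

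Next I would set up a telescoping recursion around $\G$. Writing $I_k := \int_{T_k} v'(x)\,dx$, the identity $v|_{T_{k-1}}(x_k)=a_{k-1}+I_{k-1}$ together with $\jump{v}_k = a_k - v|_{T_{k-1}}(x_k)$ yields
\[
   a_j = a_1 + \sum_{k=1}^{j-1}\bigl(I_k + \jump{v}_{k+1}\bigr),\qquad j=1,\ldots,N.
\]
Cauchy--Schwarz applied in two different ways gives
\[
   \sum_{k=1}^N |I_k| \le |\G|^{1/2}\,\|v'\|_{L^2(\CT)},\qquad
   \sum_{k=1}^N |\jump{v}_k| \le \sqrt{N}\,|\jump{v}|,
\]
hence $\max_j |a_j-a_1| \lesssim \|v'\|_{L^2(\CT)} + \sqrt{N}\,|\jump{v}|$, and consequently
\[
   \sum_{j=1}^N |T_j|\,(a_j-a_1)^2 \lesssim \|v'\|_{L^2(\CT)}^2 + N\,|\jump{v}|^2.
\]

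The final step uses the integral-mean datum to pin down the anchor $a_1$. Since $|v(x)-a_j|\le|T_j|^{1/2}\,\|v'\|_{L^2(T_j)}$ on $T_j$, element-wise integration and Cauchy--Schwarz (together with $\sum_j|T_j|^3\lesssim 1$) yield $\bigl|\<v,1\>_\G-\sum_j |T_j|\,a_j\bigr|\lesssim \|v'\|_{L^2(\CT)}$. Combined with the decomposition $\sum_j |T_j|\,a_j = |\G|\,a_1 + \sum_j |T_j|\,(a_j-a_1)$ and the bound just obtained, this gives $|a_1| \lesssim |\<v,1\>_\G| + \|v'\|_{L^2(\CT)} + \sqrt{N}\,|\jump{v}|$. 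Finally, $\sum_j |T_j|\,a_j^2 \lesssim |\G|\,a_1^2 + \sum_j |T_j|\,(a_j-a_1)^2$ is dominated by the square of the right-hand side of the lemma, which completes the proof. The only subtle point is the $\sqrt{N}$ factor: since the jumps may accumulate constructively around the closed curve, the bound on $\max_j|a_j-a_1|$ goes through the $\ell^1$-norm of $(\jump{v}_k)$, which loses a factor $\sqrt{N}$ when compared to the $\ell^2$-norm $|\jump{v}|$, and this loss is what forces the weight $\sqrt{N}$ in the estimate.
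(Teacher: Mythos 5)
Your proof is correct, but it takes a genuinely different route from the paper. The paper's argument is a duality argument: it subtracts the mean to get $v_0$ with $\<v_0,1\>_\G=0$, solves the hypersingular equation $\CW\psi=v_0$, invokes the regularity $\psi\in H^1(\G)$ with $|\psi|_{H^{1/2}(\G)}+|\psi|_{H^1(\G)}\lesssim\|v_0\|_{L^2(\G)}$, and then expands $\|v_0\|_{L^2(\G)}^2=\<\CW\psi,v_0\>_\G$ by element-wise integration by parts, so that the piecewise derivative term and the jump term appear with exactly the weights in the statement; the $\sqrt{N}$ arises there from bounding the vector $\bigl(\CV\psi'(x_j)\bigr)_{j=1}^N$ in the Euclidean norm by $\sqrt{N}\,\|\CV\psi'\|_{C^0(\G)}$. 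Your argument is entirely elementary: one-dimensional calculus on each element, a telescoping recursion for the nodal anchors $a_j$ around the closed curve, and the integral mean to pin down $a_1$; the $\sqrt{N}$ instead comes from the $\ell^1$--$\ell^2$ estimate $\sum_k|\jump{v}_k|\le\sqrt{N}\,|\jump{v}|$. All the individual steps check out (the recursion $a_{k}=a_{k-1}+I_{k-1}+\jump{v}_k$ is consistent with the paper's sign convention for $\jump{v}_j$, and the constants only involve $|\G|$). What your approach buys is independence from any mapping or regularity properties of the boundary integral operators --- it is a pure Poincar\'e--Friedrichs inequality for piecewise $H^1$ functions on a closed curve and would survive unchanged if $\CW$ were replaced by another operator. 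What the paper's approach buys is a template that extends to surfaces in $\R^3$, where no ordering of the elements and hence no telescoping argument is available; there the duality route is essentially forced. Either proof is acceptable here.
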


\begin{proof}
For given $v\in H^1(\CT)$ define $v_0:=v-c_v$ with $c_v:=|\G|^{-1}\<v,1\>_\G$
so that $\<v_0,1\>_\G=0$. Then let $\psi\in H^{1/2}(\G)/\R$ be the solution
of $\CW\psi=v_0$. In particular, $v_0\in L^2(\G)$ so that $\psi\in H^1(\G)$
with bound
\be \label{reg_psi}
   |\psi|_{H^{1/2}(\G)} + |\psi|_{H^1(\G)} \lesssim \|v_0\|_{L^2(\G)},
\ee
see \cite[Theorem 3]{Costabel_88_BIO}. Now, using the definition of $\psi$,
relation \eqref{WV}, integrating by parts, using a duality estimate,
the continuities of $\CV$ \eqref{V_cont} and
$(\cdot)':\;H^{1/2}(\G)\to H^{-1/2}(\G)$, a quotient space argument,
and the Sobolev embedding theorem, we obtain
\begin{align*}
   \|v_0\|_{L^2(\G)}^2
   &=
   \<\CW\psi,v_0\>_\G
   =
   -\<(\CV\psi')',v_0\>_\G
   =
   \<\CV\psi',v_0'\>_\CT + \jump{v_0}\cdot\bigl(\CV\psi'(x_j)\bigr)_{j=1}^N
   \\
   &\lesssim
   \|\CV\psi'\|_{H^{1/2}(\G)} \|v_0'\|_{H^{-1/2}(\CT)}
   + |\jump{v_0}|\; \Bigl|\bigl(\CV\psi'(x_j)\bigr)_{j=1}^N\Bigr|
   \\
   &\lesssim
   \|\psi'\|_{H^{-1/2}(\G)} \|v_0'\|_{L^2(\CT)}
   + |\jump{v_0}| \sqrt{N}\|\CV\psi'\|_{C^0(\G)}
   \\
   &\lesssim
   |\psi|_{H^{1/2}(\G)} \|v_0'\|_{L^2(\CT)}
   + \sqrt{N} |\jump{v_0}| \|\CV\psi'\|_{H^1(\G)}
   \\
   &\lesssim
   |\psi|_{H^{1/2}(\G)} \|v_0'\|_{L^2(\CT)}
   + \sqrt{N} |\jump{v_0}| \|\psi'\|_{L^2(\G)}.
\end{align*}
By \eqref{reg_psi} this yields
\[
   \|v_0\|_{L^2(\G)}
   \lesssim
   \|v_0'\|_{L^2(\CT)} + \sqrt{N} |\jump{v_0}|.
\]
Note that $\jump{v_0}=\jump{v}$ and $v'=v_0'$ in $L^2(\CT)$.
Therefore, this bound together with
\[
   \|v\|_{L^2(\G)} \le \|v_0\|_{L^2(\G)} + \|c_v\|_{L^2(\G)}
                   \lesssim \|v_0\|_{L^2(\G)} + |\<v,1\>_\G|
\]
proves the statement.
\end{proof}
\subsection{Norm equivalences} \label{sec_equiv}

We now prove norm equivalences in our test space $V$, comparing
the optimal test norm $\|\cdot\|_{V,{\rm opt},\alpha}$ \eqref{Vopt} with the more
practical norms $\|\cdot\|_{V,i}$ ($i=1,2$), \eqref{V1}, \eqref{V2}.
As a consequence (Corollary~\ref{cor_equiv})
we obtain lower and upper bounds for the corresponding energy norms
in the trial space $U$ which will be needed
to analyze the DPG method with optimal test functions based on the inner products
defining the norms $\|\cdot\|_{V,i}$ ($i=1,2$).

\begin{theorem} \label{thm_equiv1}
There hold the estimates
\be \label{equiv1_1}
   \|(\tau,v)\|_{V,{\rm opt},\alpha} \lesssim \|(\tau,v)\|_{V,1}
   \qquad\forall (\tau, v)\in V
   \qquad (\alpha = h_{\min}^{-1/2}),
\ee
\be \label{equiv1_2}
   \|(\tau,v)\|_{V,1} \lesssim \|(\tau,v)\|_{V,{\rm opt},\alpha}
   \qquad\forall (\tau, v)\in V
   \qquad (\alpha = N^{-1/2}).
\ee
Here, $h_{\min}$ is the length of the shortest element of the mesh $\CT$.
\end{theorem}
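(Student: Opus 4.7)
The plan is to prove each direction separately, with the two $\alpha$-choices playing against the one undesirable term that arises in that direction. Throughout, I will use the equivalent form of $\|(\tau,v)\|_{V,{\rm opt},\alpha}$ from \eqref{Vopt}, namely the sum $\|(\CV\tau)'\|_{L^2(\G)} + \|\tau+v'\|_{L^2(\CT)} + \alpha^{-1}|\jump{v}| + |\<v,1\>_\G|$.

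For the upper bound \eqref{equiv1_1} I would estimate each of the four summands of $\|(\tau,v)\|_{V,{\rm opt},\alpha}$ by $\|(\tau,v)\|_{V,1}$. The first term is handled by the continuity \eqref{V_cont} of $\CV:L^2(\G)\to H^1(\G)$. The second is immediate by the triangle inequality, and $|\<v,1\>_\G|\lesssim\|v\|_{L^2(\G)}$ by Cauchy--Schwarz. The only delicate summand is $\alpha^{-1}|\jump{v}|=h_{\min}^{1/2}|\jump{v}|$, which I would bound by invoking the elementwise scaled trace inequality $|v|_T(x_j)|^2\lesssim h_T^{-1}\|v\|_{L^2(T)}^2+h_T\|v'\|_{L^2(T)}^2$ for each node $x_j$ and the two adjacent elements. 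Summing over $j$, the $h_T^{-1}\ge h_{\min}^{-1}$ factor in front of the $L^2$-mass is what forces the choice $\alpha = h_{\min}^{-1/2}$ and recovers $\|v\|_{H^1(\CT)}$ on the right.

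The lower bound \eqref{equiv1_2} is the main point and is where the three technical lemmas of Section~\ref{sec_tech} enter. Given $(\tau,v)\in V$, set $g_1:=\tau+v'\in L^2(\G)$ and $g_2:=(\CV\tau)'\in L^2(\G)$ (both terms of the optimal norm). Apply Lemma~\ref{la_c} to obtain $(\tilde\tau,\tilde v)\in L^2(\G)\times H^1(\G)/\R$ solving the same system with the bound $\|\tilde\tau\|_{L^2(\G)}+\|\tilde v'\|_{L^2(\G)}\lesssim\|g_1\|_{L^2(\G)}+\|g_2\|_{L^2(\G)}$. The difference $(\tau_0,v_0):=(\tau-\tilde\tau,v-\tilde v)$ then satisfies the homogeneous system \eqref{vtau_0} of Lemma~\ref{la_0}, and since $\tilde v\in H^1(\G)$ has no jumps we have $\jump{v_0}=\jump{v}$. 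Lemma~\ref{la_0} therefore gives
\[
   \|\tau_0\|_{L^2(\G)} + \|v_0'\|_{L^2(\CT)}
   \lesssim
   \sqrt{N}\,|\jump{v}|
   =
   \alpha^{-1}|\jump{v}|
\]
exactly when $\alpha=N^{-1/2}$. Combining with the triangle inequality yields bounds on $\|\tau\|_{L^2(\G)}$ and $\|v'\|_{L^2(\CT)}$ purely in terms of the optimal test norm.

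It remains to control $\|v\|_{L^2(\G)}$, and here I invoke the Poincar\'e--Friedrichs estimate of Lemma~\ref{la_PF}, which bounds $\|v\|_{L^2(\G)}$ by $\|v'\|_{L^2(\CT)}+\sqrt{N}|\jump{v}|+|\<v,1\>_\G|$. The first two summands are already controlled by the previous step, and $|\<v,1\>_\G|$ is a term of $\|(\tau,v)\|_{V,{\rm opt},\alpha}$ itself. Adding the estimates then yields \eqref{equiv1_2}. The main obstacle is the homogeneous-problem step: one has to see that all information about $\tau$ and $v'$ not already measured by the $L^2$ norms of $\tau+v'$ and $(\CV\tau)'$ is encoded in the jump vector $\jump{v}$, and that the loss factor there is exactly $\sqrt{N}$, which is the precise reason the choice $\alpha=N^{-1/2}$ is optimal.
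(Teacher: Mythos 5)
Your proposal is correct and follows essentially the same route as the paper's proof: the upper bound by termwise estimates with the scaled trace inequality dictating $\alpha=h_{\min}^{-1/2}$, and the lower bound by the decomposition $(\tau,v)=(\tau_c,v_c)+(\tau_0,v_0)$ via Lemmas~\ref{la_c} and~\ref{la_0} together with the Poincar\'e--Friedrichs estimate of Lemma~\ref{la_PF}. No substantive differences to report.
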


\begin{proof}
To prove \eqref{equiv1_1} we have to show that
\be \label{pf_equiv1_1b}
   \|(\CV\tau)'\|_{L^2(\G)} + \|\tau+v'\|_{L^2(\CT)}
   + \alpha^{-1} |\jump{v}| + |\<v,1\>_\G|
   \lesssim
   \|\tau\|_{L^2(\G)} + \|v\|_{H^1(\CT)}
\ee
holds for any $\tau\in L^2(\G)$ and $v\in H^1(\CT)$.
The first two terms are estimated by using the triangle inequality
and the continuity of $\CV$ (see \eqref{V_cont} with $s=1/2$),
\[
   \|(\CV\tau)'\|_{L^2(\G)} \le \|\CV\tau\|_{H^1(\G)} \lesssim \|\tau\|_{L^2(\G)}
   \qquad\forall\tau\in L^2(\G),
\]
and by the Cauchy-Schwarz inequality we have
\[
   |\<v,1\>_\G| \le \|1\|_{L^2(\G)} \|v\|_{L^2(\G)}
   \qquad\forall v\in L^2(\G).
\]
To show \eqref{pf_equiv1_1b} it therefore remains to estimate $|\jump{v}|$.

By the Sobolev embedding theorem on a reference element $\hat T$
(i.e., the continuous inclusion $H^1(\hat T)\subset C^0(\hat T)$) and
scaling, we obtain for any $T\in\CT$
\be \label{pf_equiv1_1}
   |v(x)|^2 \lesssim |T|^{-1} \|v\|_{L^2(T)}^2 + |T| |v|_{H^1(T)}^2
   \qquad \forall x\in T, v\in H^1(T).
\ee
Therefore, together with the triangle inequality,
\[
   |\jump{v}|^2
   \lesssim
   \sum_{T\in\CT} \Bigl(|T|^{-1} \|v\|_{L^2(T)}^2 + |T| |v|_{H^1(T)}^2\Bigr)
   \lesssim
   h_{\min}^{-1} \|v\|_{H^1(\CT)}^2
   \quad\forall v\in H^1(\CT).
\]
Therefore, this term can be bounded as required for
\eqref{pf_equiv1_1b} if $\alpha\gtrsim h_{\min}^{-1/2}$.

To prove \eqref{equiv1_2} we have to show that
\be \label{pf_equiv1_2b}
   \|\tau\|_{L^2(\G)} + \|v\|_{H^1(\CT)}
   \lesssim
   \|(\CV\tau)'\|_{L^2(\G)} + \|\tau+v'\|_{L^2(\CT)}
   + \alpha^{-1} |\jump{v}| + |\<v,1\>_\G|
\ee
holds for any $\tau\in L^2(\G)$ and $v\in H^1(\CT)$.
To this end let $\tau\in L^2(\G)$ and $v\in H^1(\CT)$ be given.
We select $g_1:=\tau+v'\in L^2(\CT)$ and $g_2:=(\CV\tau)'\in L^2(\G)$
and denote by $(\tau_c,v_c)\in L^2(\G)\times H^1(\G)$ a solution of
\eqref{vtau_c}. Then $(\tau_0,v_0):=(\tau,v)-(\tau_c,v_c)$
solves \eqref{vtau_0} and there holds $\jump{v_0}=\jump{v}$.

By Lemmas~\ref{la_c} and \ref{la_0} we find that there holds
\[
   \|\tau_c\|_{L^2(\G)} + |v_c|_{H^1(\G)}
   \lesssim
   \|g_1\|_{L^2(\G)} + \|g_2\|_{L^2(\G)}
   =
   \|\tau+v'\|_{L^2(\CT)} + \|(\CV\tau)'\|_{L^2(\G)}
\]
and
\[
   \|\tau_0\|_{L^2(\G)} + \|v_0'\|_{L^2(\CT)} \lesssim \sqrt{N} |\jump{v}|.
\]
These two bounds together with the triangle inequality for
$(\tau,v)=(\tau_c,v_c)+(\tau_0,v_0)$ and the $L^2(\G)$-bound
\[
   \|v\|_{L^2(\G)}
   \lesssim
   \|v'\|_{L^2(\CT)} + \sqrt{N} |\jump{v}| + |\<v,1\>_\G|
\]
by Lemma~\ref{la_PF} yield \eqref{pf_equiv1_2b} if
$\sqrt{N}\lesssim\alpha^{-1}$. This finishes the proof of the theorem.
\end{proof}

\begin{theorem} \label{thm_equiv2}
There hold the estimates
\be \label{equiv2_1}
   \|(\tau,v)\|_{V,{\rm opt},\alpha} \lesssim \|(\tau,v)\|_{V,2}
   \qquad\forall (\tau, v)\in V
   \qquad (\alpha = h_{\min}^{-1/2}),
\ee
\be \label{equiv2_2}
   \|(\tau,v)\|_{V,2} \lesssim \|(\tau,v)\|_{V,{\rm opt},\alpha}
   \qquad\forall (\tau, v)\in V
   \qquad (\alpha = N^{-1/2}).
\ee
Here, $h_{\min}$ is the length of the shortest element of the mesh $\CT$.
\end{theorem}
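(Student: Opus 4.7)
The structure of the statement mirrors Theorem~\ref{thm_equiv1} exactly, with $\|v\|_{H^1(\CT)}$ replaced by $\|v'\|_{L^2(\CT)}+|\hat v|_h$. My plan is therefore to reduce to Theorem~\ref{thm_equiv1} whenever possible and only do new work to interchange $\|v\|_{L^2(\G)}$ with $|\hat v|_h$.

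For the upper bound \eqref{equiv2_1}, I would bound each of the four terms in $\|(\tau,v)\|_{V,\mathrm{opt},\alpha}$ by the right-hand side. The first term $\|(\CV\tau)'\|_{L^2(\G)}\lesssim\|\tau\|_{L^2(\G)}$ follows from \eqref{V_cont} at $s=1/2$, and $\|\tau+v'\|_{L^2(\CT)}\le\|\tau\|_{L^2(\G)}+\|v'\|_{L^2(\CT)}$ by the triangle inequality. For the two remaining terms, on each $T\in\CT$ I would use the representation $v|_T(x)=v|_T(x_T)+\int_{x_T}^x v'(s)\,ds$, which gives the pointwise estimate
\[
   |v|_T(x)|^2 \lesssim v|_T(x_T)^2 + |T|\,\|v'\|_{L^2(T)}^2
   \qquad\forall x\in T.
\]
Integrating this over $T$ and summing yields $\|v\|_{L^2(\G)}^2\lesssim|\hat v|_h^2+h_{\max}^2\|v'\|_{L^2(\CT)}^2$, which controls $|\<v,1\>_\G|\le|\G|^{1/2}\|v\|_{L^2(\G)}$ up to constants depending only on the geometry. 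Evaluating the pointwise bound at the nodes $x_j$ (rewriting $v|_{T_{j-1}}(x_j)$ as $v|_{T_{j-1}}(x_{T_{j-1}})$ plus $\int_{T_{j-1}}v'$) gives
\[
   |\jump{v}_j|^2 \lesssim v|_{T_j}(x_{T_j})^2 + v|_{T_{j-1}}(x_{T_{j-1}})^2 + |T_{j-1}|\,\|v'\|_{L^2(T_{j-1})}^2,
\]
so after summing and factoring out $h_{\min}^{-1}$ from the first two pieces I obtain $|\jump{v}|\lesssim h_{\min}^{-1/2}\bigl(|\hat v|_h+\|v'\|_{L^2(\CT)}\bigr)$; taking $\alpha=h_{\min}^{-1/2}$ absorbs the $h_{\min}^{-1/2}$ factor.

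For the lower bound \eqref{equiv2_2} I would simply invoke Theorem~\ref{thm_equiv1}\eqref{equiv1_2} to get
\[
   \|\tau\|_{L^2(\G)}+\|v\|_{H^1(\CT)} \lesssim \|(\tau,v)\|_{V,\mathrm{opt},\alpha}
   \qquad(\alpha=N^{-1/2}),
\]
and then note that $\|v'\|_{L^2(\CT)}\le\|v\|_{H^1(\CT)}$ is trivial, while $|\hat v|_h\lesssim\|v\|_{H^1(\CT)}$ follows from exactly the scaling estimate \eqref{pf_equiv1_1} used in Theorem~\ref{thm_equiv1}: multiplying $|v(x_T)|^2\lesssim |T|^{-1}\|v\|_{L^2(T)}^2+|T|\,|v|_{H^1(T)}^2$ by $|T|$ and summing gives $|\hat v|_h^2\lesssim\|v\|_{L^2(\G)}^2+h_{\max}^2\|v'\|_{L^2(\CT)}^2\lesssim\|v\|_{H^1(\CT)}^2$.

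I do not expect a serious obstacle here, because all of the analytic depth (the regularity arguments using \cite{Costabel_88_BIO} and Lemmas~\ref{la_c}, \ref{la_0}, \ref{la_PF}) is already buried inside Theorem~\ref{thm_equiv1}. The only care needed is the bookkeeping of the two scaling inequalities linking $\|v\|_{L^2(\G)}$ and $|\hat v|_h$ on either side of the local relation $v(x)=v(x_T)+\int_{x_T}^x v'$; both directions are elementary and cost only constants depending on the fixed geometry of $\G$.
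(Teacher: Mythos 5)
Your proposal is correct and follows essentially the same route as the paper: both reductions hinge on the two local scaling inequalities linking $\|v\|_{L^2(T)}$, $|v(x_T)|$ and $\|v'\|_{L^2(T)}$ (the paper's \eqref{PF2} and \eqref{pf_equiv2_2c}), and your lower-bound argument via Theorem~\ref{thm_equiv1}\eqref{equiv1_2} plus \eqref{pf_equiv1_1} is literally the paper's. The only cosmetic difference is that you prove the upper bound term by term with the fundamental theorem of calculus instead of citing \eqref{pf_equiv1_1b} and using reference-element scaling, which in this one-dimensional setting amounts to the same estimates.
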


\begin{proof}
To prove \eqref{equiv2_1} we have to show that
\[
   \|(\CV\tau)'\|_{L^2(\G)} + \|\tau+v'\|_{L^2(\CT)}
   + \alpha^{-1} |\jump{v}| + |\<v,1\>_\G|
   \lesssim
   \|\tau\|_{L^2(\G)} + \|v'\|_{L^2(\CT)} + |\hat v|_h
\]
holds for any $\tau\in L^2(\G)$ and $v\in H^1(\CT)$.
By \eqref{pf_equiv1_1b} it is enough to show that
\be \label{pf_equiv2_1b}
   \|v\|_{L^2(\G)}
   \lesssim
   \|v'\|_{L^2(\CT)} + |\hat v|_h
   \qquad\forall v\in H^1(\CT).
\ee
By standard arguments there holds on a reference element $\hat T$
with an endpoint $x_{\hat T}$
\[
   \|w\|_{L^2(\hat T)}^2
   \lesssim
   |w|_{H^1(\hat T)}^2 + |w(x_{\hat T})|^2
   \qquad\forall w\in H^1(\hat T).
\]
Transformation proves that
\[
   |T|^{-1} \|v\|_{L^2(T)}^2
   \lesssim
   |T| |v|_{H^1(T)}^2 + |v(x_T)|^2
   \qquad\forall v\in H^1(T), T\in\CT,
\]
so that by summation,
\be \label{PF2}
   \|v\|_{L^2(\G)}
   \lesssim
   h_{\max} \|v'\|_{L^2(\CT)} + |\hat v|_h
   \qquad\forall v\in H^1(\CT).
\ee
This yields \eqref{pf_equiv2_1b}.

To prove \eqref{equiv2_2} we have to show that
\be \label{pf_equiv2_2b}
   \|\tau\|_{L^2(\G)} + \|v'\|_{L^2(\CT)} + |\hat v|_h
   \lesssim
   \|(\CV\tau)'\|_{L^2(\G)} + \|\tau+v'\|_{L^2(\CT)}
   + \alpha^{-1} |\jump{v}| + |\<v,1\>_\G|
\ee
holds for any $\tau\in L^2(\G)$ and $v\in H^1(\CT)$.
By \eqref{pf_equiv1_2b} it is enough to show that
\be \label{pf_equiv2_2c}
   |\hat v|_h
   \lesssim
   \|v\|_{H^1(\CT)}
   \qquad\forall v\in H^1(\CT).
\ee
By \eqref{pf_equiv1_1} there holds
\[
   |v(x_T)|^2 \lesssim
   |T|^{-1} \|v\|_{L^2(T)}^2 + |T| |v|_{H^1(T)}^2
   \qquad\forall v\in H^1(T), T\in\CT.
\]
Multiplication by $|T|$ and summation over $T\in\CT$ proves that
\[
   |\hat v|_h
   \lesssim
   \|v\|_{L^2(\G)} + h_{\max} \|v'\|_{L^2(\CT)}
   \qquad\forall v\in H^1(\CT),
\]
and this implies \eqref{pf_equiv2_2c}.
The proof of the theorem is finished.
\end{proof}

By duality, norm equivalences in $V$ imply equivalences between corresponding
norms in $U$. This fact is essential to prove error estimates for the DPG method
when using different norms in the test space. The equivalences in $U$ are as
follows.

\begin{corollary} \label{cor_equiv}
Let $\|\cdot\|_{E,i}$ ($i\in\{1,2\}$) be the energy norm defined by duality
\eqref{duality} with test norm
\(
  \|(\tau,v)\|_{V,i}, 
\)
cf.~\eqref{V1}, \eqref{V2}. Then there hold the estimates
\[
   \|(\phi,\sigma,\hat\sigma)\|_{E,i}
   \lesssim
   \|\phi\|_{L^2(\G)} + \|\sigma\|_{L^2(\G)} + h_{\min}^{-1/2} |\hat\sigma|
   \qquad\forall (\phi,\sigma,\hat\sigma)\in U
\]
and
\[
   \|\phi\|_{L^2(\G)} + \|\sigma\|_{L^2(\G)} + N^{-1/2} |\hat\sigma|
   \lesssim
   \|(\phi,\sigma,\hat\sigma)\|_{E,i}
   \qquad\forall (\phi,\sigma,\hat\sigma)\in U.
\]
Here, $h_{\min}$ is the length of the shortest element of the mesh $\CT$.
\end{corollary}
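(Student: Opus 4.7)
The plan is to deduce the corollary directly from the norm equivalences in $V$ (Theorems~\ref{thm_equiv1} and \ref{thm_equiv2}) combined with the identity \eqref{EU} established in the proof of Theorem~\ref{thm_ultra}. The key observation is that the energy norms $\|\cdot\|_{E,i}$ and $\|\cdot\|_{E,\alpha}$ are defined as suprema of the same bilinear functional over the test space, differing only in the denominator used to normalize $(\tau,v)$. Hence any two-sided comparison between $\|\cdot\|_{V,i}$ and $\|\cdot\|_{V,\mathrm{opt},\alpha}$ transfers, by flipping the inequality, to a two-sided comparison between $\|\cdot\|_{E,i}$ and $\|\cdot\|_{E,\alpha}$, and the latter coincides with $\|\cdot\|_{U,\alpha}$ by \eqref{EU}.

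More concretely, I would argue as follows. For the upper bound, fix $i \in \{1,2\}$ and set $\alpha = h_{\min}^{-1/2}$. Estimates \eqref{equiv1_1} and \eqref{equiv2_1} give $\|(\tau,v)\|_{V,\mathrm{opt},\alpha} \lesssim \|(\tau,v)\|_{V,i}$ for all $(\tau,v) \in V$. Dividing both sides into the numerator $b(\phi,\sigma,\hat\sigma;\tau,v)$ and taking the supremum over $(\tau,v) \in V \setminus \{0\}$ yields
\[
   \|(\phi,\sigma,\hat\sigma)\|_{E,i}
   \lesssim
   \sup_{(\tau,v)\in V\setminus\{0\}}
   \frac{b(\phi,\sigma,\hat\sigma;\tau,v)}{\|(\tau,v)\|_{V,\mathrm{opt},\alpha}}
   = \|(\phi,\sigma,\hat\sigma)\|_{U,\alpha},
\]
where the equality is \eqref{EU}. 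This is exactly the asserted upper bound with the choice $\alpha = h_{\min}^{-1/2}$.

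For the lower bound I would do the symmetric argument with $\alpha = N^{-1/2}$: estimates \eqref{equiv1_2} and \eqref{equiv2_2} give $\|(\tau,v)\|_{V,i} \lesssim \|(\tau,v)\|_{V,\mathrm{opt},\alpha}$, whence
\[
   \|(\phi,\sigma,\hat\sigma)\|_{U,\alpha}
   = \sup_{(\tau,v)\in V\setminus\{0\}}
   \frac{b(\phi,\sigma,\hat\sigma;\tau,v)}{\|(\tau,v)\|_{V,\mathrm{opt},\alpha}}
   \lesssim \|(\phi,\sigma,\hat\sigma)\|_{E,i},
\]
using \eqref{EU} once more. Writing out $\|\cdot\|_{U,\alpha}$ via its definition \eqref{U} gives the stated inequality.

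There is no real obstacle here since all the heavy lifting, namely the norm equivalences in $V$ and the isometry identifying the energy norm with $\|\cdot\|_{U,\alpha}$, has already been done. The only point I would pay attention to is that Theorem~\ref{thm_ultra} establishes \eqref{EU} for any admissible $\alpha$ (the argument via the isomorphism $B$ does not depend on the specific value $\alpha = N^{-1/2}$), which is precisely why one may use $\alpha = h_{\min}^{-1/2}$ in the upper bound and $\alpha = N^{-1/2}$ in the lower bound independently. With this in mind the corollary follows in a few lines.
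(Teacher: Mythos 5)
Your proposal is correct and follows essentially the same route as the paper: invoke the identity \eqref{EU} (valid for any $\alpha>0$) to identify $\|\cdot\|_{U,\alpha}$ as the dual of $\|\cdot\|_{V,\mathrm{opt},\alpha}$ with respect to $b(\cdot;\cdot)$, and then transfer the two-sided comparisons of Theorems~\ref{thm_equiv1} and \ref{thm_equiv2} to the dual norms, with the inequalities reversed and the two values $\alpha=h_{\min}^{-1/2}$ and $\alpha=N^{-1/2}$ used for the upper and lower bounds, respectively. The paper states this more tersely, but the argument is the same; your explicit remark that \eqref{EU} does not depend on the particular choice $\alpha=N^{-1/2}$ is exactly the point that makes the deduction legitimate.
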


\begin{proof}
Recall the relation \eqref{EU} that identifies the $\|\cdot\|_{U,\alpha}$-norm
as being dual to the $\|\cdot\|_{V,\mathrm{opt},\alpha}$-norm with respect to
$b(\cdot;\cdot)$.
Then the statement of the corollary is equivalent to the statements of
Theorems~\ref{thm_equiv1} and \ref{thm_equiv2} respectively for $i=1$ and $i=2$,
cf. the definition \eqref{U} of the $\|\cdot\|_{U,\alpha}$-norm.
\end{proof}

\section{DPG method with optimal test functions} \label{sec_DPG}
\setcounter{equation}{0} \setcounter{figure}{0} \setcounter{table}{0}

In this section we briefly introduce the DPG method with optimal
test functions for our model problem.

We consider a discrete subspace $U_{hp}\subset U$,
\[
   U_{hp} := U_{hp}^0\times U_{hp}^0\times \R^N,
\]
where $U_{hp}^0\subset L^2(\G)$ is the piecewise polynomial space
\[
   U_{hp}^0 =
   \{\varphi\in L^2(\G);\;
     \varphi|_T \text{ is a polynomial of degree } {p_T}\ \forall T\in\CT\}.
\]
Here, we have simply used the same polynomial degrees $p_T$ for the approximation of
$\phi$ and $\sigma$, and $hp$ refers to the fact that the mesh $\CT$ and polynomial
degrees may vary.

Denoting a basis of $U_{hp}$ by $\{\Bu_i;\; i=1,\ldots,\mathrm{dim}(U_{hp})\}$,
the corresponding space of optimal test functions $V_{hp}\subset V$ is
$\mathrm{span}\{\T(\Bu_i);\; i=1,\ldots,\mathrm{dim}(U_{hp})\}$ with trial-to-test
operator $\T$ defined by
\be \label{Theta}
   \T(\Bu_i)\in V:\quad
   \<\T(\Bu_i), \Bv\>_V = b(\Bu_i, \Bv)\quad\forall \Bv\in V.
\ee
The inner product $\<\cdot,\cdot\>_V$ has to be chosen accordingly to the choice
of test norm, \eqref{V1} or \eqref{V2}.

The DPG method with optimal test functions for the model problem \eqref{model} then is:
{\em Find $(\phi_{hp}, \sigma_{hp}, \hat\sigma_{hp})\in U_{hp}$ such that}
\be \label{DPG}
   b(\phi_{hp},\sigma_{hp},\hat\sigma_{hp};\tau,v) = \<f,v\>_\G
   \qquad\forall (\tau,v)\in V_{hp}.
\ee
By design of the method one obtains optimal convergence \cite{DemkowiczG_11_CDP}:
\be \label{opt}
   \|(\phi-\phi_{hp},\sigma-\sigma_{hp},\hat\sigma-\hat\sigma_{hp})\|_E
   =
   \inf_{(\varphi,\rho,\hat\rho)\in U_{hp}}
   \|(\phi-\varphi,\sigma-\rho,\hat\sigma-\hat\rho)\|_E.
\ee
Here, $\|\cdot\|_E$ is the energy-norm that corresponds to the chosen norm in $V$,
cf.~\eqref{duality}. Now, using the norm equivalences from Corollary~\ref{cor_equiv},
the best approximation property \eqref{opt} immediately implies the following error estimate
which is the second main result of this paper.

\begin{theorem} \label{thm_DPG}
Let $(\phi,\sigma,\hat\sigma)\in U$ and $(\phi_{hp},\sigma_{hp},\hat\sigma_{hp})\in U_{hp}$
be the solutions of the ultra-weak formulation \eqref{ultra} and the DPG-scheme \eqref{DPG},
respectively. Then, if the optimal test functions are calculated by using an inner product
in $V$ corresponding to $\|\cdot\|_{V,1}$ or $\|\cdot\|_{V,2}$, see \eqref{V1}, \eqref{V2},
there holds
\[
   \|\phi-\phi_{hp}\|_{L^2(\G)} + \|\sigma-\sigma_{hp}\|_{L^2(\G)}
                                + N^{-1/2} |\hat\sigma-\hat\sigma_{hp}|
   \lesssim
   \inf_{\varphi\in U_{hp}^0,\rho\in U_{hp}^0}\Bigl(
      \|\phi-\varphi\|_{L^2(\G)} + \|\sigma-\rho\|_{L^2(\G)}
   \Bigr).
\]
\end{theorem}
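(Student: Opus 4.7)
My plan is to obtain this estimate as an essentially immediate consequence of the best-approximation property \eqref{opt} combined with the two-sided norm equivalence in $U$ furnished by Corollary~\ref{cor_equiv}. The only nontrivial observation is that the asymmetric exponents of $h_{\min}^{-1/2}$ and $N^{-1/2}$ appearing in the corollary can be handled by exploiting the fact that the discrete space $U_{hp}$ contains $\R^N$ as its third factor.

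First I would invoke \eqref{opt}, which asserts that
\[
   \|(\phi-\phi_{hp},\sigma-\sigma_{hp},\hat\sigma-\hat\sigma_{hp})\|_{E,i}
   =
   \inf_{(\varphi,\rho,\hat\rho)\in U_{hp}}
   \|(\phi-\varphi,\sigma-\rho,\hat\sigma-\hat\rho)\|_{E,i}
\]
for $i\in\{1,2\}$, depending on which test inner product has been selected to compute the trial-to-test operator $\T$. Applying the \emph{lower} bound of Corollary~\ref{cor_equiv} to the left-hand side yields
\[
   \|\phi-\phi_{hp}\|_{L^2(\G)} + \|\sigma-\sigma_{hp}\|_{L^2(\G)}
   + N^{-1/2}|\hat\sigma-\hat\sigma_{hp}|
   \lesssim
   \|(\phi-\phi_{hp},\sigma-\sigma_{hp},\hat\sigma-\hat\sigma_{hp})\|_{E,i}.
\]

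Next, I would apply the \emph{upper} bound from Corollary~\ref{cor_equiv} to every competitor in the infimum, giving
\[
   \|(\phi-\varphi,\sigma-\rho,\hat\sigma-\hat\rho)\|_{E,i}
   \lesssim
   \|\phi-\varphi\|_{L^2(\G)} + \|\sigma-\rho\|_{L^2(\G)}
   + h_{\min}^{-1/2} |\hat\sigma-\hat\rho|.
\]
The key point is that $U_{hp} = U_{hp}^0\times U_{hp}^0\times\R^N$, so the infimum over $\hat\rho\in\R^N$ can be attained by the choice $\hat\rho = \hat\sigma$, which eliminates the potentially troublesome $h_{\min}^{-1/2}$ term altogether. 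Combining the three displayed inequalities then produces
\[
   \|\phi-\phi_{hp}\|_{L^2(\G)} + \|\sigma-\sigma_{hp}\|_{L^2(\G)}
   + N^{-1/2}|\hat\sigma-\hat\sigma_{hp}|
   \lesssim
   \inf_{\varphi,\rho\in U_{hp}^0}
   \Bigl(\|\phi-\varphi\|_{L^2(\G)} + \|\sigma-\rho\|_{L^2(\G)}\Bigr),
\]
which is the claimed estimate.

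There is essentially no hard step here: the well-posedness and optimality results from the preceding sections have already absorbed all of the analytical work (well-posedness of the ultra-weak formulation, equivalence of test norms, and the derived equivalence of energy norms in $U$). The only subtlety worth flagging is the observation that the $N^{-1/2}$ versus $h_{\min}^{-1/2}$ asymmetry between the two sides of Corollary~\ref{cor_equiv} would otherwise be worrisome, but it is rendered harmless by choosing $\hat\rho=\hat\sigma$ in the infimum, a choice permitted precisely because the discrete trace unknowns are not constrained further within $U_{hp}$.
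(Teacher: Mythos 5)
Your argument is correct and is precisely the one the paper intends: it combines the best-approximation property \eqref{opt} with the two bounds of Corollary~\ref{cor_equiv}, and your explicit choice $\hat\rho=\hat\sigma$ to eliminate the $h_{\min}^{-1/2}$ term is exactly the (implicit) step that makes the paper's ``immediately implies'' claim work.
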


\subsection{Optimal test functions} \label{sec_test}

In this section we give some details on the calculation of optimal test functions.

As previously mentioned, optimal test functions can be partially calculated
analytically when selecting the inner product $\<\cdot,\cdot\>_V$ in $V$
that induces the norm $\|\cdot\|_{V,2}$, cf.~\eqref{V2}.
Recalling \eqref{Euclid} we define
\[
   \<v, \delta_v\>_h := \sum_{T\in\CT} |T|\; v|_T(x_T) \delta_v|_T(x_T),
   \quad v, \delta_v\in H^1(\CT).
\]
Then the corresponding inner product in $V$ is
\be \label{ip}
  \<(\tau,v), (\delta_\tau, \delta_v)\>_V
  :=
  \<\tau, \delta_\tau\>_\G + \<v', \delta_v'\>_\CT + \<v, \delta_v\>_h
   \quad \tau, \delta_\tau\in L^2(\G),\ v, \delta_v\in H^1(\CT).
\ee
Being $\{\varphi_i;\; i=1,\ldots,\mathrm{dim}(U_{hp}^0)\}$ a basis of $U_{hp}^0$
and denoting by $e_j\in\R^N$ an element of the canonical basis of $\R^N$,
a basis of $U_{hp}$ is given by
\be \label{basis}
   \{(\varphi_i,0,0), (0,\varphi_i,0), (0,0,e_j);\;
     i=1,\ldots,\mathrm{dim}(U_{hp}^0),\; j=1,\ldots,N\}.
\ee
Calculating optimal test functions therefore requires to consider three different
types, as follows.

\paragraph{Calculation of $(\tau,v)=\T(\varphi_i,0,0)$.}
In this case \eqref{Theta} reduces to find $\tau\in L^2(\G)$ and $v\in H^1(\CT)$
such that
\be \label{Theta_phi}
   \<\tau, \delta_\tau\>_\G + \<v', \delta_v'\>_\CT + \<v, \delta_v\>_h
   =
   b(\varphi_i,0,0;\delta_\tau,\delta_v)
   =
   \<\varphi_i, (\CV\delta_\tau)'\>_\G + \<\varphi_i,1\>_\G \<\delta_v,1\>_\G
\ee
for any $\delta_\tau\in L^2(\G)$ and $\delta_v\in H^1(\CT)$.

Selecting $\delta_\tau=0$ and $\delta_v$ as the characteristic function on an
arbitrary element $T\in\CT$ we find that
\[
   v|_T(x_T) = \<\varphi_i,1\>_\G.
\]
Therefore, again selecting $\delta_\tau=0$ and now an arbitrary function $\delta_v$
with support in $\bar T$, we deduce that
\[
   \<v', \delta_v'\>_T
   =
   \<\varphi_i,1\>_\G \bigl(\<\delta_v,1\>_T - |T| \delta_v(x_T)\bigr)
   \quad\forall \delta_v\in H^1(T).
\]
Integration by parts reveals that (with $s$ being the arc length and $s=0$
corresponding to the ``left'' endpoint of $T$) $v$ is the quadratic polynomial
\be \label{v_phi}
   v(s) = \<\varphi_i,1\>_\G \bigl( (|T|-s/2)s + 1 \bigr)
   \quad\text{on}\ T\simeq s\in (0,|T|).
\ee
Selecting $\delta_v=0$ in \eqref{Theta_phi} leads to
\be \label{tau_phi}
   \tau\in L^2(\G):\quad
   \<\tau, \delta_\tau\>_\G = \<\varphi_i, (\CV\delta_\tau)'\>_\G
   \quad\forall \delta_\tau\in L^2(\G).
\ee
This problem can be easily approximated by finite elements. Then the system matrix
is a simple block-diagonal mass matrix. Global properties of the boundary integral
operator of the model problem are inherited via the right-hand side of \eqref{tau_phi}.
In general an optimal test function $\tau$ will be non-zero also outside the
support of a basis function $\varphi_i$. In Section~\ref{sec_int} we give some
more details.

\paragraph{Calculation of $(\tau,v)=\T(0,\varphi_i,0)$.}
Now \eqref{Theta} reduces to find $\tau\in L^2(\G)$ and $v\in H^1(\CT)$
such that
\be \label{Theta_sigma}
   \<\tau, \delta_\tau\>_\G + \<v', \delta_v'\>_\CT + \<v, \delta_v\>_h
   =
   \<\varphi_i, \delta_\tau+\delta_v'\>_\CT
\ee
for any $\delta_\tau\in L^2(\G)$ and $\delta_v\in H^1(\CT)$.
One immediately concludes that
\be \label{tau_sigma}
   \tau=\varphi_i.
\ee
To determine $v$ we assume that $\varphi_i$ has support in a single element
$\bar T$. Then, selecting $\delta_\tau=0$ and $\delta_v$ with support in $\bar T$
in \eqref{Theta_sigma}, we find that
\[
   \<v', \delta_v'\>_T + |T|\; v|_T(x_T) \delta_v(x_T)
   = \<\varphi_i, \delta_v'\>_T \quad\forall \delta_v\in H^1(T).
\]
Integrating by parts and identifying the strong form of a second-order equation
for $v$ leads to the solution
\be \label{v_sigma}
   \mathrm{supp}(\varphi_i)\subset \bar T\Rightarrow\qquad
   v(s) = \int_0^s \varphi_i(x(t))\,dt
   \quad\text{on}\quad T\simeq s\in (0,|T|)
\ee
and $v=0$ elsewhere.
As before, $s$ is the arc-length and for simplicity $s=0$ corresponds to the
``left'' endpoint of $T$.

\paragraph{Calculation of $(\tau,v)=\T(0,0,e_j)$.}
We have to determine $\tau\in L^2(\G)$ and $v\in H^1(\CT)$
such that
\be \label{Theta_hatsigma}
   \<\tau, \delta_\tau\>_\G + \<v', \delta_v'\>_\CT + \<v, \delta_v\>_h
   =
   e_j\cdot\jump{\delta_v}
\ee
for any $\delta_\tau\in L^2(\G)$ and $\delta_v\in H^1(\CT)$.
We find that $\tau=0$.

Let us assume that $T_{j-1}$ and $T_j$ are the elements that have,
in mathematically positive orientation, the nodes $x_{j-1}$, $x_j$ and
$x_j$, $x_{j+1}$, respectively.
Selecting $\delta_v$ in \eqref{Theta_hatsigma} with support in $\bar T_{j-1}$
respectively $\bar T_j$ we obtain the two relations
\[
   - \<v'', \delta_v\>_{T_{j-1}}
   + v'|_{T_{j-1}}(x_j)    \delta_v(x_j)
   - v'|_{T_{j-1}}(x_{j-1})\delta_v(x_{j-1})
   + |T_{j-1}|\; v|_{T_{j-1}}(x_{j-1}) \delta_v(x_{j-1})
   = -\delta_v(x_j)
\]
for any $\delta_v\in H^1(T_{j-1})$ and
\[
   - \<v'', \delta_v\>_{T_{j}}
   + v'|_{T_{j}}(x_{j+1})    \delta_v(x_{j+1})
   - v'|_{T_{j}}(x_j)\delta_v(x_j)
   + |T_{j}|\; v|_{T_{j}}(x_j) \delta_v(x_j)
   = \delta_v(x_j)
\]
for any $\delta_v\in H^1(T_{j})$. The solution of the latter relation
is $v=|T_{j}|^{-1}$. We rewrite the first relation in strong form
\[
   v''=0\quad \text{on}\quad T_{j-1},\quad
   -v'(x_{j-1}) + |T_{j-1}|\; v(x_{j-1}) = 0, \quad
   v'(x_j)=-1,
\]
to find a linear function on $T_{j-1}$ as solution. Together, identifying
$x_{j-1}$ with arc length $s=0$,
\be \label{v_hatsigma}
   v(s) = \left\{
   \begin{array}{cl}
      -s-|T_{j-1}|^{-1} & \text{on}\ T_{j-1} \simeq s\in (0,|T_{j-1}|),\\
      |T_{j}|^{-1}      & \text{on}\ T_{j},\\
      0                 & \text{otherwise}.
   \end{array}\right.
\ee
In Table~\ref{tab_test} we give an overview of the findings in the
form
\[
   \T:\; U_{hp}\subset U=L^2(\G)\times L^2(\G)\times \R^N\to
        V=L^2(\G)\times H^1(\CT);\quad
   (\tau,v)=\T(\phi,\sigma,\hat\sigma).
\]

\begin{table}
\begin{center}
\begin{tabular}{c|l}
trial basis function & optimal test function\\
$(\phi,\sigma,\hat\sigma)$ &  $(\tau,v)=\T(\phi,\sigma,\hat\sigma)$\\
\hline
$(\varphi_i, 0, 0)$ &
$\rule{0em}{1.7em}\tau\in L^2(\G):\;
   \left\{\begin{array}{l}
      \<\tau, \delta_\tau\>_\G = \<\varphi_i, (\CV\delta_\tau)'\>_\G\
      \forall\delta_\tau\in L^2(\G)\ \eqref{tau_phi}\\
      \text{must be approximated in practice}
   \end{array}\right.$
\\
& $v$ piecewise quadratic polynomial \eqref{v_phi}
\\ \hline
$\rule{0em}{1.2em}(0, \varphi_i, 0)$ &
$\tau=\varphi_i$ \eqref{tau_sigma}
\\
& $v = \left\{\begin{array}{l}
       \text{antiderivative of $\varphi_i$ on $\mathrm{supp}(\varphi_i)\subset \bar T$},\\
       0\ \text{elsewhere}
       \end{array}\right.$
       \eqref{v_sigma}
\\ \hline
$\rule{0em}{1.2em}(0, 0, e_j)$ &
$\tau=0$
\\
& $v = \left\{\begin{array}{l}
       \text{piecewise linear on patch containing $x_j$}, \\
       0\ \text{elsewhere}
       \end{array}\right.$
       \eqref{v_hatsigma}
\end{tabular}
\end{center}
\caption{Summary optimal test functions for closed curve}
\label{tab_test}
\end{table}

\section{DPG setting on an open curve} \label{sec_int}
\setcounter{equation}{0} \setcounter{figure}{0} \setcounter{table}{0}

So far we have considered the model problem \eqref{model} on a closed curve.
For simplicity we report on numerical results on an open curve.
Here the setting is slightly different since then the energy space
is such that the solution is unique without considering a quotient space.
In the following we describe the setting and calculation of optimal test functions
in this case.

For an open polygon the model problem is: {\em For given
$f\in L^2(\G)$ find $\phi\in \tilde H^{1/2}(\G)$ such that}
\[
   \<\CW\phi,\psi\>_\G = \<f, \psi\>_\G\quad\forall \psi\in \tilde H^{1/2}(\G).
\]
Here, $\<\cdot,\cdot\>_\G$ denotes again a duality, this time between
$H^{-1/2}(\G)$ and $\tilde H^{1/2}(\G)$. The latter space consists of the jumps
across $\G$ of $H^1$ functions in the exterior of $\G$ and $H^{-1/2}(\G)$ is its
dual space. As before we rewrite this problem as the first order system
\eqref{system} with additional unknown $\sigma=\CV\phi'$.

Let $\CT$ be a mesh of elements $T$ on $\G$ that is compatible with
the corners, and let us denote the nodes by $x_j$, $j=1,\ldots,N$,
$x_1$ and $x_N$ being the endpoints.
Proceeding as before in the case of a closed polygon
we obtain the ultra-weak formulation:
{\em find $\sigma\in L^2(\G)$, $\phi\in L^2(\G)$ and $\hat\sigma\in\R^N$
such that}
\begin{alignat}{5}
   \<\sigma, \tau\>_\G &+ \<\phi, (\CV\tau)'\>_\G &&= 0
      &\qquad&\forall\tau\in L^2(\G),
   \label{ultra1_int}\\
   \<\sigma, v'\>_\CT &+ \quad\hat\sigma\cdot\jump{v}
      &&= \<f, v\>_\G
      &\qquad&\forall v\in H^1(\CT).
   \label{ultra2_int}
\end{alignat}
Here, $\hat\sigma\in\R^N$ with $\hat\sigma_j=\sigma(x_j)$ ($j=1,\ldots, N$)
replaces the nodal values of $\sigma$ and we have used that
$\phi$ vanishes at the endpoints of $\G$.
Furthermore, at interior nodes $x_j$ ($j=2,\ldots,N-1$) the jump $\jump{v}_j$
is defined as before, at the endpoints we re-define
$\jump{v}_1:=v(x_1)$, $\jump{v}_N:=-v(x_N)$, and, as before,
\[
   \jump{v} := (\jump{v}_j)_{j=1}^N\in\R^N \qquad\text{for}\quad v\in H^1(\CT).
\]
In short form we have arrived at the ultra-weak formulation \eqref{ultra} with
\[
   b(\phi,\sigma,\hat\sigma;\tau,v)
   := \<\phi, (\CV\tau)'\>_\G + \<\sigma, \tau+v'\>_\CT + \hat\sigma\cdot\jump{v}.
\]
Note that the rank-one terms $\<\phi,1\>_\G$ and $\<v,1\>_\G$ have disappeared.
Indeed, system \eqref{ultra1_int}, \eqref{ultra2_int} does not have a kernel
with respect to $\phi$ or $v$.

\begin{remark}
An open curve is the extreme case where the solution $\phi$ is usually not
in $H^1(\G)$ even for smooth right-hand side functions. Therefore, our technique
of proving existence and uniqueness of a solution to the ultra-weak formulation and
equivalence with the model problem (Theorem~\ref{thm_ultra}) does not apply
without major changes.
For the same reason, the theory of Section~\ref{sec_norms} on the equivalence
of norms is just outside the range of open curves. Our numerical experiments will
show that the DPG method with optimal test functions delivers satisfactory
results even in this limit case.
\end{remark}

\subsection{Optimal test functions} \label{sec_int_test}

The calculation of optimal test functions on open curves is almost identical
to the procedure presented for closed curves in Section~\ref{sec_test}.
To be able to calculate most test functions analytically,
we again select the inner product \eqref{ip}
that defines the norm $\|\cdot\|_{V,2}$ in $V$ \eqref{V2}.
The ansatz space is defined as before, and denoted by $U_{hp}$ with basis
as in \eqref{basis}.

Let us recall the calculation of test functions and indicate necessary changes.

\paragraph{Calculation of $(\tau,v)=\T(\varphi_i,0,0)$.}
In this case the new bilinear form $b(\cdot,\cdot)$ with missing rank-one terms
induces some changes. Instead of \eqref{Theta_phi} we now have
\[
   \<\tau, \delta_\tau\>_\G + \<v', \delta_v'\>_\CT + \<v, \delta_v\>_h
   =
   \<\varphi_i, (\CV\delta_\tau)'\>_\G.
\]
Selecting $\delta_\tau=0$ and $\delta_v$ arbitrary reveals that $v=0$.
On the other, as before $\tau$ is defined by
\be \label{tau_phi_int}
   \tau\in L^2(\G):\quad
   \<\tau, \delta_\tau\>_\G = \<\varphi_i, (\CV\delta_\tau)'\>_\G
   \quad\forall \delta_\tau\in L^2(\G).
\ee
For our numerical experiments, we approximate $\tau$ by the finite element
solution $\tilde\tau\in V_{h\tilde p}^0$ of
\be \label{tau_phi_fem}
   \tilde\tau\in V_{h\tilde p}^0:\quad
   \<\tilde\tau, \delta_\tau\>_\G = \<\varphi_i, (\CV\delta_\tau)'\>_\G
   \quad\forall \delta_\tau\in V_{h\tilde p}^0.
\ee
Here, the discrete space
\[
   V_{h\tilde p}^0 :=
   \{\varphi\in L^2(\G);\;
     \varphi|_T \text{ is a polynomial of degree } {\tilde p_T}\ \forall T\in\CT\}.
\]
is based on the same mesh $\CT$ but with increased polynomial
degrees $\tilde p_T>p_T$.
The right-hand side of \eqref{tau_phi_fem} can be implemented by integrating by
parts and using standard procedures for stiffness matrices involving the operator $\CV$,
see, e.g., \cite{ErvinHS_93_hpB}.

\paragraph{Calculation of $(\tau,v)=\T(0,\varphi_i,0)$.}
In this case the problem \eqref{Theta_sigma} does not change so that
optimal test functions depending on $\sigma=\varphi_i$ do not change.
We have $\tau=\varphi_i$ and
$v(s) = \int_0^s \varphi_i(x(t))\,dt$ on $T\simeq s\in (0,|T|)$
and $v=0$ elsewhere, if $\mathrm{supp}(\varphi_i)\subset \bar T$.

\paragraph{Calculation of $(\tau,v)=\T(0,0,e_j)$.}
In this case the only change is due to the jump $\jump{\delta_v}$ reducing
to plus or minus the trace of $\delta_v$ at the endpoints of $\G$.
More precisely, $\tau=0$ as determined previously and
\begin{align}
   \label{v_hatsigma_int1}
   v(s) &= \left\{
   \begin{array}{cl}
      |T_{j}|^{-1}      & \text{on}\ T_{j}\\
      0                 & \text{otherwise}
   \end{array}\right. &&(j=1),
   \\
   \label{v_hatsigma_intj}
   v(s) &= \left\{
   \begin{array}{cl}
      -s-|T_{j-1}|^{-1} & \text{on}\ T_{j-1} \simeq s\in (0,|T_{j-1}|)\\
      |T_{j}|^{-1}      & \text{on}\ T_{j}\\
      0                 & \text{otherwise}
   \end{array}\right. &&(j=2,\ldots,N-1),
   \\
   \label{v_hatsigma_intN}
   v(s) &= \left\{
   \begin{array}{cl}
      -s-|T_{j-1}|^{-1} & \text{on}\ T_{j-1} \simeq s\in (0,|T_{j-1}|)\\
      0                 & \text{otherwise}
   \end{array}\right. &&(j=N).
\end{align}

A summary of optimal test functions, corresponding to Table~\ref{tab_test},
is given in Table~\ref{tab_test_int}.

\begin{table}
\begin{center}
\begin{tabular}{c|l}
trial basis function & optimal test function\\
$(\phi,\sigma,\hat\sigma)$ &  $(\tau,v)=\T(\phi,\sigma,\hat\sigma)$
\hspace{10em} \\
\hline
$(\varphi_i, 0, 0)$ &
   $\rule{0em}{1.7em}\tau\in L^2(\G):\;
   \left\{\begin{array}{ll}
          \<\tau, \delta_\tau\>_\G = \<\varphi_i, (\CV\delta_\tau)'\>_\G,\
          \forall\delta_\tau\in L^2(\G) &\eqref{tau_phi_int}\\
          \text{approximated by}\ \tau_{h\tilde p} &\eqref{tau_phi_fem}
   \end{array}\right.$
\\
& $v=0$
\\ \hline
$\rule{0em}{1.2em}(0, \varphi_i, 0)$ &
$\tau=\varphi_i$ \eqref{tau_sigma}
\\
& $v = \left\{\begin{array}{l}
       \text{antiderivative of $\varphi_i$ on $\mathrm{supp}(\varphi_i)\subset \bar T$},\\
       0\ \text{elsewhere}
       \end{array}\right.$
       \eqref{v_sigma}
\\ \hline
$\rule{0em}{1.2em}(0, 0, e_j)$ &
$\tau=0$
\\
& $v = \left\{\begin{array}{l}
       \text{piecewise linear on patch containing $x_j$}, \\
       0\ \text{elsewhere}
       \end{array}\right.$
       \eqref{v_hatsigma_int1}--\eqref{v_hatsigma_intN}
\end{tabular}
\end{center}
\caption{Summary optimal test functions for open curve}
\label{tab_test_int}
\end{table}

\subsection{Numerical results} \label{sec_num}

We consider the model problem \eqref{model} on the open curve $\G=(-1,1)\times\{0\}$
with $f=1/2$. In this case the exact solution is $\phi(x_1,x_2)=\sqrt{1-x_1^2}$.
It has the well-known square root singularities at the endpoints of $\G$.
In particular, $\phi\in H^s(\G)$ for any $s<1$. Therefore, for piecewise polynomial
approximations of $\phi$ and $\sigma$ on quasi-uniform meshes one expects an
$L^2$-error of the order $h\sim N^{-1}$
($h$ being the mesh size and $N$ the dimension of the discrete space).
On the other hand, for optimally refined meshes one expects an order
$N^{-(p+1)}$ when piecewise polynomials of degree $p$ are used.
Precisely these orders are observed for uniform and adaptively refined meshes
in Figures~\ref{fig_uniform} and \ref{fig_adaptive}, respectively, for degrees
$p=0,1,2$. We thus confirm the error estimate by Theorem~\ref{thm_DPG} (proved
for closed curves). For each case the $L^2$ and energy errors are plotted on a
double-logarithmic scale. Both curves (for $L^2$ and energy error) are roughly
parallel. This confirms our theoretical result of equivalence of both norms
(proved by Corollary~\ref{cor_equiv} for closed curves).

Let us make a few comments on the implementation.
The test functions are as described in Section~\ref{sec_int_test}.
Problem \eqref{tau_phi_fem} is solved by using
the corresponding mesh of the ansatz space and by increasing polynomial degrees
only by one. Here, the weakly-singular operator is implemented by analytical
inner integration and outer Gauss quadrature, cf.~\cite{ErvinHS_93_hpB} for details.
To calculate the energy errors we approximate the trial-to-test operator
$\Theta$ analogously, but increasing the polynomial degrees by two.
The adaptive refinement procedure uses the natural error indicators which are
the local contributions of the energy error on the individual elements. We halve
elements of the largest indicators and which, as a whole, are associated to 50\%\
of the total energy error.

\begin{figure}[htb]
\centering
\includegraphics[width=0.7\textwidth]{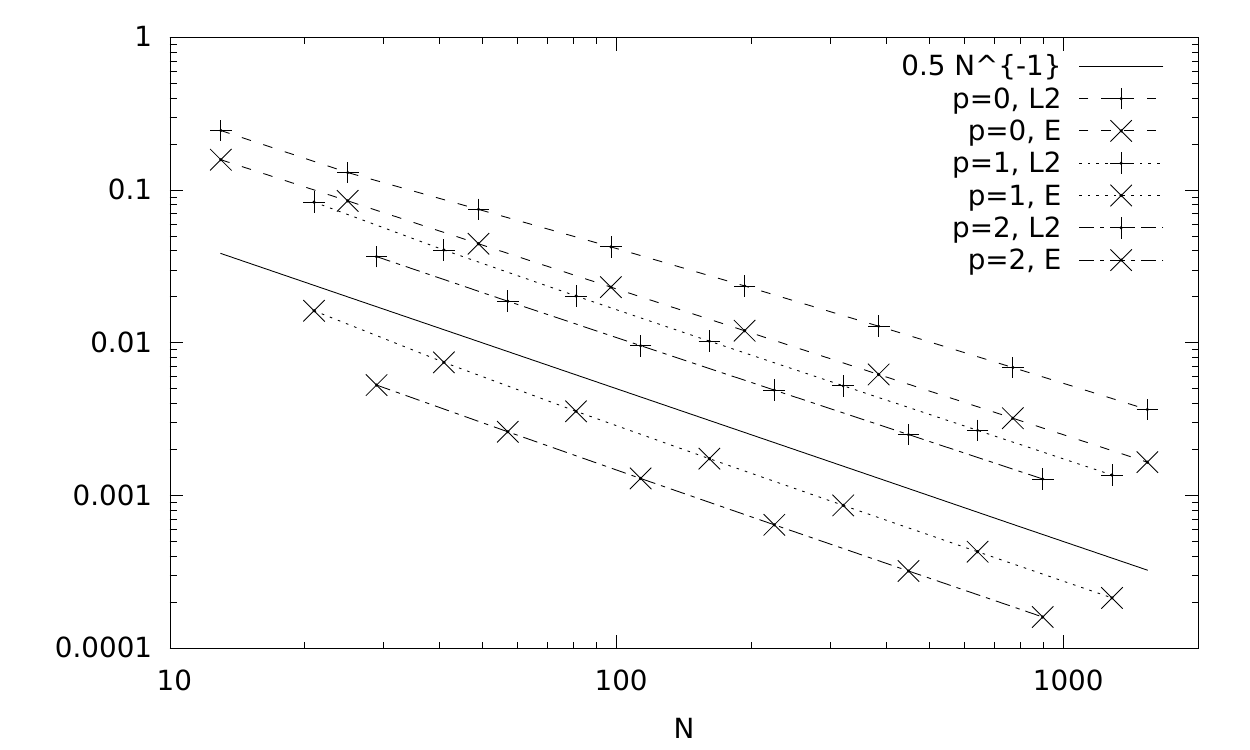} 
\caption{Uniform refinement with $p=0,1,2$ (errors in $L^2$ ``L2'' and energy norm ``E'').}
\label{fig_uniform}
\end{figure}

\begin{figure}[htb]
\centering
\includegraphics[width=0.7\textwidth]{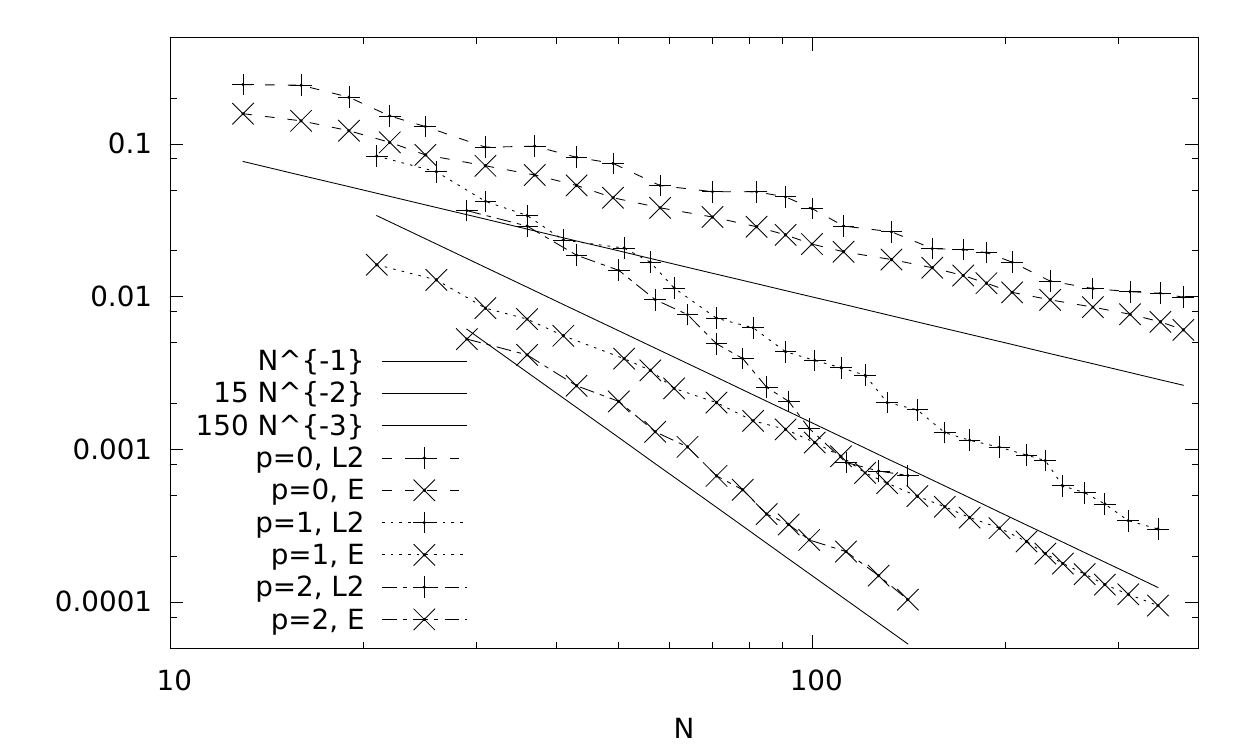} 
\caption{Adaptive refinement with $p=0,1,2$ (errors in $L^2$ ``L2'' and energy norm ``E'').}
\label{fig_adaptive}
\end{figure}

\section{Conclusions}

We have proposed an ultra-weak formulation for hypersingular integral equations
on open and on closed curves. In the case of closed curves, we have proved its
well-posedness and equivalence to the standard variational formulation.
Based on this ultra-weak form we have defined a DPG method with optimal
test functions and, for closed curves, have shown its quasi-optimal convergence
in $L^2$ as well as the energy norm. The proof makes use of the equivalence
of both norms. This equivalence as well as the quasi-optimal convergence
are verified by numerical experiments on an open curve. Error calculation is
inherent to the method by construction. Since appearing test norms are of local
nature, the construction of error indicators and adaptive strategies is
straightforward. This has also been confirmed by numerical experiments.

Overall, the DPG method with optimal test functions, based on an ultra-weak
formulation, has shown to be a fully functional method also for boundary
integral equations (at least for the model problem) with the great advantage
of being posed in local norms, contrary to complicated fractional-order Sobolev
norms in standard Galerkin methods.

\bibliographystyle{siam}
\bibliography{/home/norbert/tex/bib/bib,/home/norbert/tex/bib/heuer,/home/norbert/tex/bib/fem}

\end{document}